\tikzstyle{rect} = [rectangle, rounded corners, 
\tikzstyle{arrow} = [thick,->,>=stealth]
\newtheorem{Thm}{Theorem}
\newtheorem{Cor}[Thm]{Corollary}
\newtheorem{thm}{Theorem}[section]
\newtheorem{lem}[thm]{Lemma}
\theoremstyle{remark}
\newtheorem{rmk}[thm]{Remark}
\theoremstyle{definition}
\newtheorem{Def}[thm]{Definition}
\newenvironment{manualtheorem}[1]{%
  \manualtheoreminner
}{\endmanualtheoreminner}
 \newcommand{\hol}{ \operatorname{Hol}}
\title{Cocycle Stability of Linear Automorphisms and their Jointly Integrable
Perturbations}
\author{Ignacio Correa}
\date{}
\begin{document}

\maketitle

\begin{abstract}
  We show cocycle stability for linear maps with a weak irreducibility
  condition and their jointly
  integrable perturbations.
\end{abstract}

\section{Introduction} 
For a dynamical system $f$ we are interested in
solving for which $\varphi:\mathbb{T}^d \rightarrow \mathbb{T}^{d}$ the cohomological
equation
\begin{equation}
  \label{eq:cohomological}
  \varphi(x)=u(fx)-u(x)+ \mathrm{const.} \tag{$\star$}
\end{equation}
has a solution $u$ (in other words, we
want to see when $\varphi$ is cohomologus to a constant).

The classic result for this question is Livsic, which for hyperbolic
systems states that having trivial periodic data is a sufficient and
necessary condition. For linear ergodic automorphisms Veech showed
that this is also the case \cite{Veech1986PeriodicPointsInvariant}.

One can also show, using Fourier series, that for Diophantine
translation of the torus there is no restriction at all (other than
perhaps some extra regularity) as for which $\varphi$ accept solutions.

For partially hyperbolic systems (which a priori might not have
periodic points) Katok and Kononenko came up with a natural necessary condition: to have trivial periodic cycle
functionals, that is \(\varphi\) should have trivial holonomies along cycles made out
of stable and unstable segments (see \S~\ref{holonomy} or
\cite{Katok.Kononenko1996CocycleStabilityPartially} for details).

We will show that, in some cases, this is restriction is sufficient, which immediately
implies \emph{cocycle stability} in the sense of \cite[Definition
1]{Katok.Kononenko1996CocycleStabilityPartially}. That is, the set of
coboundaries is closed, which would be the case since then we would have
\begin{equation*}
  \biggl\{ \varphi : \varphi = u\circ f - u \biggr\}  = \biggl\{ \int\varphi=0 \biggr\} \cup \bigcup_{\gamma \text{ $su$-cycle}}  \biggl\{
    PCF_{\gamma}\varphi=0 \biggr\}.
\end{equation*} More specifically we will show that the $C^R$
functions are $C^r$-sable.

Results in this direction are known when $f$ has some accessibility
condition \cite{Wilkinson2013CohomologicalEquationPartially,
  Katok.Kononenko1996CocycleStabilityPartially}. So instead in  these notes we
will work with the spiritual opposite of this case: the \emph{jointly
  integrable} case (by this we mean that the $su$-distribution
$E^s\oplus E^u$ is integrable, see definition~\ref{jointlyint}).

While some other minor results can be established with the ``naive''
idea of our technique
(see \S~\ref{toy}), our most interesting application would be to linear
maps and their (jointly integrable) perturbations.

\begin{Thm}\label{main}Given a chosen regularity $r$ (and a central
  dimension $c$) there is a minimum regularity $R$ so that for any
  partially hyperbolic $F\in SL(d, \mathbb{Z})$ \emph{Katznelson irreducible} and strongly $R$-bunched  automorphism of the torus, if a jointly integrable
  diffeomorphism $f: \mathbb{T}^d\to \mathbb{T}^d$ is $C^{R}$-close enough to
  $F$ then it is cocycle stable with the following
  regularities:

  If $\varphi$ is $C^{R}$ with
  $\operatorname{PCF}_{\gamma}\varphi=0$ for any $su$-cycle $\gamma$, then the
  cohomological equation
\begin{equation*}
  \varphi(x)=u(fx)-u(x)+ \mathrm{const.} \tag{\ref{eq:cohomological}}
\end{equation*}
has a $C^r$ solution $u$.
\end{Thm}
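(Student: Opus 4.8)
The plan is to use joint integrability to break \eqref{eq:cohomological} into an ``$su$-leafwise'' equation and a ``central'' equation, solve each, and glue the solutions with a Journé-type regularity argument; the whole scheme is quantitative, and the gap between $R$ and $r$ is dictated by three successive losses of derivatives that appear along the way.

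\emph{Step 1 (the leafwise solution).} Since $f$ is jointly integrable (Definition~\ref{jointlyint}), $E^s\oplus E^u$ integrates to a foliation $\mathcal W^{su}$ which, by $C^R$-proximity to the linear model, is a small perturbation of the linear $su$-foliation of $F$; strong $R$-bunching guarantees that $\mathcal W^{su}$ together with its stable and unstable subfoliations has $C^R$ leaves and $C^R$-controlled (stable/unstable) holonomies, in the spirit of Kalinin--Sadovskaya and of \cite{Wilkinson2013CohomologicalEquationPartially}. Given $\varphi\in C^R$ with $\operatorname{PCF}_\gamma\varphi=0$ for every $su$-cycle, I would construct a function $u_0$, determined along each leaf of $\mathcal W^{su}$ up to an additive constant, by integrating $\varphi$ along $su$-paths: on a stable segment from $x$ to $y$ the increment is $\sum_{n\ge 1}\bigl(\varphi(f^n x)-\varphi(f^n y)\bigr)$ and on an unstable segment it is $\sum_{n\ge 0}\bigl(\varphi(f^{-n}y)-\varphi(f^{-n}x)\bigr)$; the series converge geometrically by bunching, and path-independence is exactly the vanishing of the periodic cycle functionals (cf.\ \S~\ref{holonomy} and \cite{Katok.Kononenko1996CocycleStabilityPartially}). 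A telescoping computation then shows that $\bar\varphi:=\varphi-(u_0\circ f-u_0)$ is constant along the leaves of $\mathcal W^{su}$. The cost of this step is the transverse regularity of $u_0$, which is precisely what strong bunching is there to supply.

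\emph{Step 2 (the central equation).} By construction $\bar\varphi$ descends to the space of $su$-leaves, which by joint integrability and closeness to the linear model is a $c$-dimensional quotient carrying an induced map $\bar f$ that is $C^R$-close to the central factor $\bar F$ of $F$; \emph{Katznelson irreducibility} is exactly the hypothesis that $\bar F$ has no root of unity among its eigenvalues, so that Katznelson's small-divisor lemma applies and bounds $\lVert (\bar F^{\ast})^k n-m\rVert$ from below by $C(\varepsilon)e^{-\varepsilon\lvert k\rvert}$ for $n\in\mathbb Z^c\setminus\{0\}$. It remains to solve $\bar\varphi=\bar u\circ\bar f-\bar u+\mathrm{const}$. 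For the linear $\bar F$ this is pure Fourier analysis: $\widehat{\bar u}(n)$ is a geometric-type series in the iterates $\widehat{\bar\varphi}\bigl((\bar F^{\ast})^k n\bigr)$, and it converges because the polynomial-in-$\lvert n\rvert$ decay of $\widehat{\bar\varphi}$ (from $\bar\varphi\in C^R$) beats the sub-exponential small divisors once $R$ is large relative to $r$, $c$ and $\varepsilon$. For the perturbed $\bar f$ I would run a Newton/KAM iteration in which Katznelson's estimate plays the role of a Diophantine condition and in which each step only inverts finitely many modes, so that the exponential loss is absorbed; if the central factor happens to admit a sufficiently smooth conjugacy to $\bar F$, one may instead transport the linear solution. (Note that, since accessibility fails here, this central equation is genuinely nontrivial, unlike in the accessible setting of \cite{Wilkinson2013CohomologicalEquationPartially,Katok.Kononenko1996CocycleStabilityPartially}; compare also the ergodic-automorphism case \cite{Veech1986PeriodicPointsInvariant}.)

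\emph{Step 3 (gluing and bookkeeping).} With $\pi$ the projection to the leaf space, set $u=u_0+\bar u\circ\pi$; then $u\circ f-u=(\varphi-\bar\varphi)+(\bar\varphi-\mathrm{const})=\varphi-\mathrm{const}$, so $u$ solves \eqref{eq:cohomological}. The function $u$ is $C^R$ along $\mathcal W^{su}$ (from $u_0$) and, along the central foliation, as smooth as $\bar u$ plus the transverse regularity of $u_0$; Journé's lemma then upgrades leafwise regularity along the two transverse foliations to global $C^r$ regularity, provided the transverse regularities produced in Steps~1 and~2 exceed the Journé threshold. The \textbf{main obstacle} is the accounting of the three derivative losses --- the bunching-controlled transverse regularity of $u_0$, the sub-exponential small-divisor loss in the central Fourier step, and the Journé gluing loss --- and proving that a single finite $R=R(r,c)$ makes them simultaneously compatible. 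I expect the central step to be the true bottleneck: Katznelson's estimate is only sub-exponential rather than polynomial, so the iteration must be organized so that one never inverts the small divisors on all frequencies at once, and it is this organization, together with the strong bunching needed to feed enough regularity into $u_0$, that forces $R$ to be taken large.
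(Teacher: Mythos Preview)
Your overall architecture---solve along $\mathcal W^{su}$ via holonomy, solve along the center, glue with Journ\'e---matches the paper, and your Step~1 and Step~3 are essentially what is done in \S\ref{extend}. The real gap is Step~2. There is no ``$c$-dimensional quotient carrying an induced map $\bar f$'': under Katznelson irreducibility the $su$-leaves are dense in $\mathbb T^d$, so the leaf space in $\mathbb T^d$ is not even Hausdorff, and if you pass to the universal cover the leaf space is a copy of $\mathbb R^c$ on which your $\bar\varphi$ has no periodicity (your $u_0$, once you pin down the leafwise constants by choosing basepoints on a central leaf, is \emph{not} $\mathbb Z^d$-periodic). So there is no Fourier expansion ``$\widehat{\bar\varphi}(n)$, $n\in\mathbb Z^c$'' to work with, and no single map $\bar f$ on a compact quotient for which one can run Veech or a KAM scheme.

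The paper's replacement for your Step~2 is the central translations of \S\ref{sec:trans}: one fixes a single central leaf $\mathcal C$ in the universal cover and studies the $\mathbb Z^d$-action $T_n x=\mathcal C\cap\mathcal W^{su}(x+n)$ on it. Katznelson irreducibility is \emph{not} ``no roots of unity among the eigenvalues of $\bar F$'' but rather $C\cap\mathbb Z^d=\{0\}$ (Definition in \S\ref{katznelson}); via Katznelson's lemma this yields a \emph{polynomial} Diophantine bound with exponent $\tau=c$ for the linear central translations (Theorem~\ref{katzthendio}), not the sub-exponential estimate you describe. A Moser-type argument (\S\ref{fede linearization}) then linearizes the perturbed $T_n$, and one solves---by ordinary Fourier with polynomial small divisors, no KAM needed---the $\mathbb Z^d$-cocycle equation $\hol_{x+n}^{T_n x}\varphi=u(T_n x)-u(x)$ on $\mathcal C$ (\S\ref{sec:fourier}, \S\ref{sec:solved-central}). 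The commutation $fT_n=T_{Fn}f$ then forces this $u$ to solve the $f$-equation along $\mathcal C$ (Lemma~\ref{centralsol}), and from there your Steps~1 and~3 finish the job. In short, the missing idea is to trade the ill-posed single-map problem on a nonexistent quotient for a $\mathbb Z^d$-action problem on one central leaf; the gain is a genuine polynomial Diophantine condition, which also removes the ``sub-exponential bottleneck'' you anticipated.
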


\emph{Katznelson irreducible} is the (very weak) irreducibility
condition needed to apply Katznelson's lemma \cite[Lemma
3]{Katznelson1971ErgodicAutomorphismsTn} and is explicitly defined
in definition~\ref{diophantine condition}. There we
show that this condition is actually necessary for solving \eqref{eq:cohomological}.

For the purposes of potential applications, in \S~\ref{optimal}, we
also give explicit values for $R$.

Our main tool would a $\mathbb{Z}^d$-action on a central leaf called
\emph{central translations}, introduced by F. Rodriguez-Hertz in
\cite{RodriguezHertz2005StableErgodicityCertain} and defined here in
\S~\ref{sec:trans}. Following that paper the objective would be to show that these enjoy a
Diophantine property (\S~\ref{linear-thm}) in the linear case and then, using Moser's
techniques \cite{Moser1990CommutingCircleMappings}, we show that they
can be linearized in the non-linear case (\S~\ref{fede linearization}).

In here we are able to show this Diophantine property in more
generality, and since
\cite[\S~6.3]{RodriguezHertz2005StableErgodicityCertain} doesn't use
any of their other hypotheses we have the following result:
\begin{Cor}\label{conjugacy}

  If a partially hyperbolic automorphism $F\in SL(d, \mathbb{Z})$ is Katznelson irreducible and $(9c+4)$-bunched,
  then any $C^{9c+4}$ jointly integrable diffeomorphism $f$ which is $C^{6c+1}$-close
  to $F$ is ergodic and topologically conjugated to $F$.

  In here $c$ is the dimension of the center distribution for the
  partial hyperbolic splitting.

\end{Cor}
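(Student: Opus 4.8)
The plan is to derive Corollary~\ref{conjugacy} by assembling three ingredients: the topological identification of the central translations of $f$ with those of $F$, the Diophantine estimate of \S\ref{linear-thm}, and the linearization scheme of \S\ref{fede linearization}; the output of these three steps is exactly the hypothesis of \cite[\S6.3]{RodriguezHertz2005StableErgodicityCertain}. The point worth stressing is that that section of \cite{RodriguezHertz2005StableErgodicityCertain} is a black box: from the single input ``the central translations of $f$ are, leaf by leaf, topologically conjugate to the linear central translations of $F$, which satisfy a simultaneous Diophantine condition'' it produces both ergodicity of $f$ and a homeomorphism $h:\mathbb{T}^d\to\mathbb{T}^d$ with $h\circ f=F\circ h$, and it never invokes any irreducibility beyond what is needed for that Diophantine condition. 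So it suffices to verify the input.

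First I would recall from \S\ref{sec:trans} the construction of the central translations: for the linear model $F$ they form a $\mathbb{Z}^d$-action by affine isometries along each central leaf, and for the perturbation $f$ — where joint integrability (Definition~\ref{jointlyint}) guarantees that $E^s_f\oplus E^u_f$ integrates to a genuine foliation, hence that $E^c_f$ integrates to a foliation $\mathcal{W}^c_f$ — the same holonomy recipe yields a $\mathbb{Z}^d$-action $\mathcal{T}^f$ along the leaves of $\mathcal{W}^c_f$. Structural stability of the $su$-holonomies under the $C^{6c+1}$-closeness identifies the combinatorics of these two actions and realizes $\mathcal{T}^f$ as a small perturbation of a linear action on each leaf; what is not automatic is the smooth conjugacy on each leaf, which is the content of the next two steps.

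Second, the Diophantine property: since $F\in SL(d,\mathbb{Z})$ is \emph{Katznelson irreducible} (Definition~\ref{diophantine condition}), Katznelson's lemma \cite[Lemma 3]{Katznelson1971ErgodicAutomorphismsTn} applies to the relevant integer combinations and shows that the generators of the linear central $\mathbb{Z}^d$-action satisfy a simultaneous Diophantine condition on the $c$-dimensional leaf with a controlled exponent; this is precisely \S\ref{linear-thm}, now established under this weak hypothesis alone. I would record the exact exponent here, since it fixes the derivative loss in the KAM step. Third — where the work and the numerology live — I would run Moser's linearization for commuting circle (here, $c$-torus) diffeomorphisms \cite{Moser1990CommutingCircleMappings} on the action $\mathcal{T}^f$, uniformly over the central leaves: the Diophantine bound from \S\ref{linear-thm} controls the small divisors, the $(9c+4)$-bunching controls the regularity of $\mathcal{W}^c_f$ and of its stable/unstable holonomies (hence the transverse regularity propagated through the iteration, and the sense in which $\mathcal{T}^f$ is $C^{6c+1}$-close to linear), and the $C^{9c+4}$-regularity of $f$ supplies the ambient smoothness budget the scheme consumes.

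The main obstacle I anticipate is exactly this last bookkeeping: matching Moser's per-step derivative counts and convergence exponents against $9c+4$ and $6c+1$ while carrying transverse control (leafwise-$C^0$, but Hölder or better in the ambient directions) of every quantity in the iteration, so that the limiting leafwise conjugacies glue to a genuine homeomorphism of $\mathbb{T}^d$; once the norms and the loss of derivatives are set up correctly the iteration itself is routine. With the central translations of $f$ thus linearized, the hypothesis of \cite[\S6.3]{RodriguezHertz2005StableErgodicityCertain} is met, and ergodicity of $f$ together with the topological conjugacy to $F$ follow, which proves the corollary.
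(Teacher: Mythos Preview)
Your proposal is correct and follows exactly the paper's argument: the Diophantine property under Katznelson irreducibility (\S\ref{linear-thm}) feeds Moser's linearization (\S\ref{fede linearization}), and then \cite[\S6.3]{RodriguezHertz2005StableErgodicityCertain} delivers ergodicity and the topological conjugacy. One clarification: the linearization is carried out on a \emph{single} fixed central leaf $\mathcal{C}=\mathcal{W}^c(0)$, not leaf by leaf, so the transverse-regularity bookkeeping you flag as the main obstacle does not actually arise --- the extension from $\mathcal{C}$ to all of $\mathbb{T}^d$ is entirely absorbed by the black box \cite[\S6.3]{RodriguezHertz2005StableErgodicityCertain}.
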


In
\cite[Theorem 5.1]{RodriguezHertz2005StableErgodicityCertain} it is shown that, in
the pseudo-Anosov case (which is stronger than Katznelson
irreducibility) with center dimension $2$, there is a
dichotomy on the accessibility classes:
either $E^s\oplus E^u$ is integrable or $f$ is accessible\footnote{Their
  hypothesis of having eigenvalues of modulus one is not necessary for
this result.}. In the general
accessible case Wilkinson
\cite{Wilkinson2013CohomologicalEquationPartially} showed the cocycle
stability. So we get that these automorphisms are
``stably cocycle stable'':
\begin{Cor}
  If $F\in SL(d,\mathbb{Z})$ is pseudo-Anosov, partially hyperbolic with
  splitting $S\oplus C \oplus U$ with $\dim C =2$, and strongly $R$-bunched, then
  any $f$ which is $C^{R}$-close to $F$ (with $R$ big enough) is cocycle stable.
\end{Cor}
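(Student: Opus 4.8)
\emph{Proof proposal.} The plan is to split into the two cases furnished by the Rodriguez-Hertz dichotomy and invoke an already available cocycle-stability statement in each; the content of the proof is essentially reconciling the regularity thresholds of the results being quoted. Fix the target regularity $r$ and recall that a pseudo-Anosov $F$ is in particular Katznelson irreducible, so the hypotheses of Theorem~\ref{main} (with central dimension $c=\dim C=2$) are met once $F$ is strongly $R$-bunched. Let $R_1=R_1(r)$ be the threshold produced by Theorem~\ref{main} in the jointly integrable case; let $R_2$ be the regularity (for $f$, for the cocycle $\varphi$, and for the associated center-bunching exponent) under which Wilkinson's theorem \cite{Wilkinson2013CohomologicalEquationPartially} produces a $C^r$ solution of \eqref{eq:cohomological} for an accessible partially hyperbolic $f$ with vanishing periodic cycle functionals; and let $R_3$ be whatever $C^1$-type closeness is needed to apply the dichotomy \cite[Theorem 5.1]{RodriguezHertz2005StableErgodicityCertain}. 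Put $R=\max\{R_1,R_2,R_3\}$, enlarged if necessary so that ``strongly $R$-bunched'' implies all the bunching hypotheses used below; this maximum is finite since each constituent threshold depends only on $r$ and on $c=2$.

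Now take any $f$ that is $C^R$ and $C^R$-close to $F$; then $f$ is partially hyperbolic (a $C^1$-open condition) with a splitting $S'\oplus C'\oplus U'$ close to that of $F$. By \cite[Theorem 5.1]{RodriguezHertz2005StableErgodicityCertain} — whose modulus-one eigenvalue assumption is not needed, cf.\ the footnote above — one of two things happens. If $E^s\oplus E^u$ of $f$ is integrable, Theorem~\ref{main} applies verbatim: every $C^R$ function $\varphi$ with $\operatorname{PCF}_\gamma\varphi=0$ for all $su$-cycles $\gamma$ admits a $C^r$ solution of \eqref{eq:cohomological}. If instead $f$ is accessible, then (using the $R$-bunching) $f$ falls into the scope of Wilkinson's theorem, which again yields a $C^r$ solution under the same vanishing-periodic-cycle-functionals hypothesis. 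In either case the set of $C^r$ coboundaries of $f$ equals $\{\int\varphi=0\}\cup\bigcup_\gamma\{\operatorname{PCF}_\gamma\varphi=0\}$, i.e.\ $f$ is cocycle stable in the sense of \cite[Definition 1]{Katok.Kononenko1996CocycleStabilityPartially}. Since this holds for every $f$ that is $C^R$-close to $F$, the automorphism $F$ is ``stably cocycle stable''.

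The only genuine obstacle is the compatibility bookkeeping flagged above: one must check that strong $R$-bunching of the linear map $F$ passes, by continuity of the bunching exponents, to the center-bunching condition that Wilkinson's theorem requires of the nearby nonlinear $f$, and that the dichotomy of \cite{RodriguezHertz2005StableErgodicityCertain} is available in the present category without the extra eigenvalue hypothesis; both are routine but must be stated carefully. No new dynamical input beyond Theorem~\ref{main}, \cite[Theorem 5.1]{RodriguezHertz2005StableErgodicityCertain}, and \cite{Wilkinson2013CohomologicalEquationPartially} is required.
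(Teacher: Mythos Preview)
Your argument is correct and matches the paper's own reasoning: invoke the Rodriguez-Hertz dichotomy \cite[Theorem~5.1]{RodriguezHertz2005StableErgodicityCertain} for pseudo-Anosov $F$ with $\dim C=2$, then apply Theorem~\ref{main} in the jointly integrable branch and Wilkinson \cite{Wilkinson2013CohomologicalEquationPartially} in the accessible branch, with $R$ taken large enough to cover all thresholds. The paper is terser about the regularity bookkeeping (deferring to \S\ref{optimal}), but the logical structure is identical.
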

The $R$ in this theorem as well as the regularity of the solution of
\eqref{eq:cohomological} are as established in \S~\ref{optimal}.

The idea of the proof is that we can solve \eqref{eq:cohomological}
along a central leaf
using the Diophantine property of the central translations
(\S~\ref{sec:solved-central}). Then using the holonomy functionals
defined in \S~\ref{holonomy} we can extend this solution to the whole
torus (\S~\ref{extend}).

In \S~\ref{toy}, which can be read independently of the rest of this
notes, we illustrate this idea of using Diophantine dynamics to solve
\eqref{eq:cohomological} along the center and then extend using
holonomy functionals in a simpler case.

\section{Preliminaries}

\subsection{Partial Hyperbolicity}

\begin{Def}

We say that a diffeomorphism $f:M \to M$ is \emph{partially hyperbolic} if
there is an invariant splitting $TM = E^s\oplus E^c \oplus E^u$ so that (for
some metric) 
\begin{align*}
  \lVert D^s_xf \rVert &< 1,  &m(D^u_xf) &> 1, \\
  \lVert D^s_xf \rVert &< m( D^c_xf ), &m(D^u_xf)  &> \lVert D^c_xf \rVert,
\end{align*} where $m(A)$ is the conorm of $A$ and $D^{\sigma}f = Df
|_{{E^{\sigma}}}$.
\end{Def}

For general partially hyperbolic systems $E^s$ and $E^u$ are
integrable into invariant foliations $\mathcal{W}^{ss}$ and $\mathcal{W}^{uu}$. In our case, where $f$ is
a perturbation of a linear map, also $E^c$, $E^s\oplus E^c$, and $E^c\oplus E^u$
are integrable into invariant foliations
 $\mathcal{W}^c$, $\mathcal{W}^{cs}$, and $\mathcal{W}^{cu}$ \cite{Hirsch.Pugh.ea1970InvariantManifolds}.

\begin{Def}\label{jointlyint}

  We say that a partially hyperbolic diffeomorphism $f$ is\emph{ jointly
  integrable} if there is an invariant foliation $\mathcal{W}^{su}$ tangent to
  $E^s\oplus E^u$. We call the leaves of $\mathcal{W}^{su}$ of \emph{$su$-leaves}.

\end{Def}

\begin{rmk}\label{suregular}
This foliation $\mathcal{W}^{su}$, when it exist, has $C^r$ leaves. Indeed,
locally we can express the leaves as graphs and we can use Journé's
lemma \cite{Journe1988RegularityLemmaFunctions} on the functions giving
these graphs to prove this regularity. Furthermore, $\mathcal{W}^{su}(x)$ only
needs to be a set saturated by stable and unstable leaves and
transversal to the center leaves. But we won't use this fact in any
relevant way, the only important thing is that the projection along
stable leaves and along unstable leaves commute.
\end{rmk}

Of course linear maps $F\in SL(d,\mathbb{Z})$ (thought as maps of the torus
$\mathbb{T}^d$) are jointly integrable as the
invariant splitting should be a splitting $\mathbb{R}^d=S\oplus C \oplus U$ into vector
subspaces.

In our theorems we ask for hypothesis on $F$ with respect to an implicitly given partial
hyperbolic splitting.
Whenever we ask for $f$ to be jointly integrable, we mean with respect
to the splitting that comes from perturbing the given splitting.

Asking for stronger conditions (called bunching conditions) in the
domination of center it is known
\cite{Wilkinson2013CohomologicalEquationPartially} that relevant
holonomy maps have certain amount of regularity.

\begin{Def}

  We say that $f$ partially hyperbolic is \emph{$R$-bunched} if (for
  some metric) for
  every $x$
\begin{align*}
  \lVert D^s_xf \rVert& < m( D^c_xf )^R,  &m(D^u_xf) &> \lVert D^c_xf \rVert^R, \\
  \lVert D^s_xf \rVert &< m( D^c_xf ) \lVert D^c_xf \rVert^{-R},  &m(D^u_xf) &> \lVert D^c_xf \rVert m( D^c_xf )^{-R}.
\end{align*}

If further 
\begin{equation*}
  \max \{ \lVert D^s_xf \rVert, m(D^u_xf)^{-1} \} < \min \{  m(D^c_xf)^R, \lVert D^c_xf \rVert^{-R} \}
\end{equation*} we say that $f$ is \emph{strongly $R$-bunched}.
\end{Def}

Note that the bounds for partial hyperbolicity and for bunching are
all open in the $C^1$ topology, so if $f$ is close enough to a
 linear map $F$ satisfying this conditions so will $f$.

On the other hand, for a linear map $F$ is easy to show that the partial hyperbolicity
condition and the bunching conditions are equivalent to analogous
conditions when replacing norms and conorms with the biggest and
smallest (absolute value of) eigenvalue of the respective subspace. In
this case if $F|_C$ only has eigenvalues of modulus one then it is
$R$-bunched and strongly $R$-bunched for any $R$.

In this notes partial hyperbolicity and jointly integrability are
always assumed.

 \subsection{Central Translations}\label{sec:trans}
Solving \eqref{eq:cohomological}
without any hypothesis on \(\varphi\) usually requires a certain Diophantine
property. In our case this property is hidden behind an auxiliary
construction called the \emph{central translations}.

 Is easy to see that in our case (jointly integrable perturbations of
 linear maps) every pair of $su$-leaf and central leaf intersect
 exactly once in the universal cover. Fix a ``favorite'' central leaf
 $\mathcal{C} = \mathcal{W}^c(0)$ of a fix point $0$.

 We define an action  $T_n: \mathcal{C} \to \mathcal{C}$ of $\mathbb{Z}^d$ in $\mathcal{C}$ by 
\begin{equation*}
  \{ T_nx\} = \mathcal{C} \bigcap \mathcal{W}^{su}(x+n)
\end{equation*}
where, by abuse of notation, we are thinking of $\mathcal{C}, \mathcal{W}^{su}(x+n) \subset \mathbb{R}^d$
as living in the universal cover.

\begin{figure}[h]
    \centering
    \def\svgwidth{0.5\columnwidth}
    \subimport{figures}{trans.pdf_tex}
\end{figure}

We refer to these as \emph{central translations}. In \S~\ref{linear-thm}
we will show that these are, in a certain sense, Diophantine and so
the perturbations can be linearized (\S~\ref{fede linearization}) and we
will be able to use this Diophantine property to solve the cohomological equation along the center in
\S~\ref{sec:solved-central}.

Since the intention is to linearize these translations we start here by
projecting them to a vector space 
\begin{equation*}
  \tilde{T}_n= \Pi \circ T_n \circ \Pi^{-1} : C \to C
\end{equation*} where $C \subset \mathbb{R}^d$ is the central subspace for the nearby
linear map $F$ and $\Pi: \mathcal{C} \to C$ is the projection to this subspace along
the $su$-leaves of $f$. See
\cite[Proposition~B.1]{RodriguezHertz2005StableErgodicityCertain} to
see that $\Pi$ is indeed well defined and invertible.

This $\Pi$ shouldn't be confused with $\pi^c:\mathbb{R}^d\to \mathcal{C}$  the projection to
our favorite central leaf for $f$
\begin{equation*}
  \{ \pi^cx\} = \mathcal{C} \bigcap \mathcal{W}^{su}(x).
\end{equation*}

Notice that $\pi^c$ and $\Pi$ are both ``reasonable'' holonomy maps, and
that $T_n(x) = \pi^c(x+n)$. So according to
\cite{Pugh.Shub.ea1997HolderFoliations} we have 
\begin{lem}

  If $f$ is $R$-bunched and $C^R$ then $T_n$ and $\tilde{T}_n$ are uniformly
  $C^R$, and $\pi^c$ is uniformly $C^R$ along the leaves of $\mathcal{W}^c$.

\end{lem}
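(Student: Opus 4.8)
The plan is to recognize all three maps as holonomies of the $su$-foliation $\mathcal{W}^{su}$ of $f$, restricted to transversals of complementary dimension (center leaves, and the linear central subspace $C$), and to read off their regularity from the bunched-holonomy theorem of \cite{Pugh.Shub.ea1997HolderFoliations}. The extra ingredient is that translation by $n\in\mathbb{Z}^d$ is a deck transformation of $\mathbb{R}^d$, hence a $C^\infty$ isometry whose derivatives carry no $n$-dependence and which maps the favorite leaf $\mathcal{C}=\mathcal{W}^c(0)$ onto the center leaf $\mathcal{W}^c(n)$ (because $\mathcal{W}^c$ is the lift of a foliation of $\mathbb{T}^d$, so it is deck-invariant).

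First I would treat $\pi^c$ along centers. Since each $su$-leaf meets each center leaf exactly once (in the universal cover) and the two families are uniformly transverse (the linear splitting $\mathbb{R}^d=S\oplus C\oplus U$ is transverse and $f$ is $C^1$-close to $F$), the restriction $\pi^c|_{\mathcal{W}^c(y)}$ is precisely the $\mathcal{W}^{su}$-holonomy from the transversal $\mathcal{W}^c(y)$ to the transversal $\mathcal{C}$. By joint integrability, and using that the stable- and unstable-holonomy projections commute (Remark~\ref{suregular}), this holonomy factors as a composition of a $\mathcal{W}^{uu}$-holonomy and a $\mathcal{W}^{ss}$-holonomy between center leaves, each taking place inside a fixed center-unstable resp.\ center-stable leaf. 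The $R$-bunching hypothesis is exactly the condition under which \cite{Pugh.Shub.ea1997HolderFoliations} shows such stable and unstable holonomies between center plaques are uniformly $C^R$; composing the two factors gives that $\pi^c$ is uniformly $C^R$ along $\mathcal{W}^c$.

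Next, $T_n(x)=\pi^c(x+n)$. As observed, $x\mapsto x+n$ is a smooth isometry of the universal cover sending $\mathcal{C}$ onto $\mathcal{W}^c(n)$, so $T_n$ is the composition of this map with $\pi^c|_{\mathcal{W}^c(n)}$ and is therefore uniformly $C^R$ by the previous paragraph; since $T_n^{-1}=T_{-n}$ the bound is two-sided. For $\tilde T_n=\Pi\circ T_n\circ\Pi^{-1}$ it remains to control $\Pi$ and $\Pi^{-1}$, but $\Pi$ is the $\mathcal{W}^{su}$-holonomy of $f$ from the transversal $\mathcal{C}$ to the transversal $C$, which is well defined and invertible by \cite[Proposition~B.1]{RodriguezHertz2005StableErgodicityCertain}, with $C$ uniformly transverse to the $su$-leaves because $f$ is close to $F$; so the same factorization together with \cite{Pugh.Shub.ea1997HolderFoliations} gives $\Pi,\Pi^{-1}\in C^R$, and hence $\tilde T_n$ is uniformly $C^R$.

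The point requiring care is uniformity. One needs that the $C^R$ bounds of \cite{Pugh.Shub.ea1997HolderFoliations} depend only on the bunching rates and the local geometry of the foliations, so that they descend to the universal cover: this is automatic because the relevant foliations of $f$ are lifts of foliations of the compact torus and, being $C^1$-small perturbations of the linear ones, stay uniformly transverse with uniformly bounded plaque geometry; this is what makes ``uniformly $C^R$ along the leaves of $\mathcal{W}^c$'' meaningful over \emph{all} center leaves, including the distant $\mathcal{W}^c(n)$. The independence of the bound on $n$ then follows because the $n$-dependence sits entirely in the isometry $x\mapsto x+n$ (unit linear part, no higher derivatives). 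A secondary, routine matter is the bookkeeping that matches $R$-bunching to $C^R$-holonomies when $R$ is non-integer (the Hölder remainder of the derivatives), handled exactly as in \cite{Pugh.Shub.ea1997HolderFoliations,Wilkinson2013CohomologicalEquationPartially}; this is where I expect the only real friction.
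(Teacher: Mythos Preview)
Your proposal is correct and follows essentially the same route as the paper: the paper's entire argument is the one-sentence observation preceding the lemma that $\pi^c$ and $\Pi$ are ``reasonable'' holonomy maps and $T_n(x)=\pi^c(x+n)$, together with the citation of \cite{Pugh.Shub.ea1997HolderFoliations}. Your write-up simply unpacks this, spelling out the factorization through $\mathcal{W}^{ss}$- and $\mathcal{W}^{uu}$-holonomies and the uniformity coming from deck-invariance and compactness of $\mathbb{T}^d$, which the paper leaves implicit.
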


We describe what uniformly $C^R$ means right after lemma~\ref{holreg}.

\subsection{Holonomy Functionals}\label{holonomy}
 
Let us define the \emph{holonomy functionals}.  Consider $x$ a point and $x^+\in \mathcal W^{ss}(x)$, and 
$x^-\in \mathcal W^{uu}(x)$. In each case we define the functional
$\operatorname{Hol}$ in the following ways
\begin{gather*}\operatorname{Hol}_x^{x^+} \varphi = \sum_{n=0}^\infty \varphi (f^nx) - \varphi
(f^nx^+),\\
\operatorname{Hol}_x^{x^-} \varphi =- \sum_{n=1}^\infty \varphi (f^{-n}x) - \varphi (f^{-n}x^-) \end{gather*}
and if $\gamma = (x_0,x_1,\dots,x_k)$ is a \emph{$su$-sequence}, by that we mean
$x_{j}\in \mathcal W^{ss}(x_{j-1})$ or
$x_{j}\in \mathcal W^{uu}(x_{j-1})$, we extend the functional
$$\hol_\gamma \varphi = \sum_{j=1}^k \hol_{x_{j-1}}^{x_j}
\varphi.$$ In the case where $\gamma$ is \emph{periodic}, that is $x_0=x_k$,
this is the \emph{periodic cycle functional}
$PCF_\gamma \varphi = \operatorname{Hol}_\gamma \varphi$.

The condition $PCF_\gamma \varphi = 0$ for all $su$-sequence $\gamma$ is a natural
necessary condition for solving the cohomological equation
\eqref{eq:cohomological}. In such case we say that $\varphi$ has \emph{trivial
  periodic cycle functionals}, and in this notes we will always assume that
$\varphi$ satisfies this condition.
 
So, under this hypothesis, if $y\in \mathcal{W}^{su}(x)$ (recall that in
this notes we consider $f$ being jointly integrable) we write
$$\hol_x^y \varphi
=\hol_\gamma \varphi$$ where $\gamma = (x_0=x,x_1,\dots,x_k=y)$ is a
$su$-sequence going from $x$ to $y$ (the trivial periodic cycle
hypothesis makes this well defined, regardless of the choice of $\gamma$).

Formally $-\sum\varphi(f^nx)$ and $\sum \varphi(f^{-n}x)$ solve
\eqref{eq:cohomological}, so, naively, we can think of
``$\hol_x^y\varphi=u(y)-u(x)$''. This, in a sense, shows that this functional
solves the equation along $\mathcal{W}^{su}$.

Below we establish some trivial properties of these functionals to
reference in the future (one of which formalizes the idea of the
previous paragraph).

\begin{lem}

  If $y,z\in \mathcal{W}^{su}(x)$ then $\hol$ satisfies the following properties
  \begin{description}
  \item[Fundamental theorem of dynamics]  
\begin{equation*}\label{fundamental}
  \hol_x^y (u\circ f - u)= u(y)- u (x) \tag{FT}
\end{equation*} or, equivalently, 
\begin{equation*}
   \left( \hol_{fx}^{fy}-\hol_x^y\right) \varphi = \varphi(y)- \varphi(x) \tag{\ref{fundamental}}
 \end{equation*}
 \item[Additivity]
  \begin{equation}\label{additivity}
  \hol_x^z\varphi= \hol_x^y\varphi+\hol_y^z\varphi. \tag{Add.}
\end{equation}
  \end{description}

\end{lem}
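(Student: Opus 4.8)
The plan is to handle \eqref{additivity} first, since the fundamental identity for a general $su$-pair will then follow from the one-step case by additivity. Additivity is a matter of unwinding definitions: given $y,z\in\mathcal W^{su}(x)$ (so all three points lie on the same $su$-leaf), choose a $su$-sequence $\gamma_1$ from $x$ to $y$ and a $su$-sequence $\gamma_2$ from $y$ to $z$; their concatenation $\gamma$ is again a $su$-sequence, now from $x$ to $z$, and from the defining sum $\hol_\gamma\varphi=\sum_j\hol_{x_{j-1}}^{x_j}\varphi$ one reads off $\hol_\gamma\varphi=\hol_{\gamma_1}\varphi+\hol_{\gamma_2}\varphi$. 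Because the trivial periodic cycle hypothesis makes $\hol_x^y$, $\hol_y^z$ and $\hol_x^z$ independent of the chosen sequences, this is precisely \eqref{additivity}.

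For the first form of \eqref{fundamental} I would treat a single stable, resp.\ unstable, step and then extend by \eqref{additivity}. If $x^+\in\mathcal W^{ss}(x)$, substitute $\varphi=u\circ f-u$ into the defining series: the $n$-th summand becomes $\bigl(u(f^{n+1}x)-u(f^{n}x)\bigr)-\bigl(u(f^{n+1}x^+)-u(f^{n}x^+)\bigr)$, so with $a_n:=u(f^nx)-u(f^nx^+)$ the series telescopes and its $N$-th partial sum is $a_{N+1}-a_0$. Since $f^Nx$ and $f^Nx^+$ lie on a common stable leaf their distance tends to $0$, hence $a_{N+1}\to0$ by continuity of $u$, and the series sums to $-a_0=u(x^+)-u(x)$. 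The unstable step is the mirror image: for $x^-\in\mathcal W^{uu}(x)$ one iterates backwards, the boundary term $u(f^{-N}x)-u(f^{-N}x^-)$ vanishing because these points share an unstable leaf, and the extra sign in the definition of $\hol_x^{x^-}$ again leaves $u(x^-)-u(x)$. For general $y\in\mathcal W^{su}(x)$, pick a $su$-sequence $(x_0=x,\dots,x_k=y)$ and combine \eqref{additivity} with the one-step identities: $\hol_x^y(u\circ f-u)=\sum_{j=1}^k\bigl(u(x_j)-u(x_{j-1})\bigr)=u(y)-u(x)$.

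The ``equivalently'' form is the cocycle identity $(\hol_{fx}^{fy}-\hol_x^y)\varphi=\varphi(y)-\varphi(x)$, valid for every $\varphi$ with trivial periodic cycle functionals. Here too I would reduce to one step: for $x^+\in\mathcal W^{ss}(x)$ we have $fx^+\in\mathcal W^{ss}(fx)$, and re-indexing gives $\hol_{fx}^{fx^+}\varphi=\sum_{n\ge0}\bigl(\varphi(f^{n+1}x)-\varphi(f^{n+1}x^+)\bigr)=\sum_{m\ge1}\bigl(\varphi(f^{m}x)-\varphi(f^{m}x^+)\bigr)$, which differs from $\hol_x^{x^+}\varphi$ exactly by the absent $m=0$ term $\varphi(x)-\varphi(x^+)$; the unstable step is analogous, with the $n=0$ term now appearing in the backward sum. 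Since $f$ preserves $\mathcal W^{ss}$ and $\mathcal W^{uu}$, the $f$-image of a $su$-sequence from $x$ to $y$ is a $su$-sequence from $fx$ to $fy$, so adding the one-step identities along it and telescoping yields the general statement. Substituting $\varphi=u\circ f-u$ into this identity and invoking the first form of \eqref{fundamental} on $\hol_x^y$ and $\hol_{fx}^{fy}$ collapses it to a tautology, which is the sense in which the two displays are equivalent.

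The only genuine content here --- as the word ``trivial'' in the statement anticipates --- is the vanishing of the telescoping boundary terms in the limit, which uses continuity of $u$ together with contraction of $\mathcal W^{ss}$ forwards (resp.\ of $\mathcal W^{uu}$ backwards), and, underneath everything, the absolute convergence of the defining series, which holds under the standing regularity assumption on $\varphi$ via the standard exponential estimate $d(f^nx,f^nx^+)\lesssim\lambda^n d(x,x^+)$ along stable leaves (and its backward analogue). I therefore do not expect any real obstacle; the only thing to be careful with is the bookkeeping of index shifts and signs, particularly in the backward (unstable) sums.
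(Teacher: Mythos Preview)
Your argument is correct and is exactly the natural one: additivity by concatenation of $su$-sequences (well-definedness coming from the trivial PCF hypothesis), then the one-step telescoping computation for stable and unstable legs, extended to general $y\in\mathcal W^{su}(x)$ via additivity. The paper in fact gives no proof at all --- it introduces the lemma as ``trivial properties'' and moves on --- so there is nothing to compare against beyond noting that your write-up fills in precisely the routine computation the author chose to omit.
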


We establish too the regularity that we will need for these functionals.
\begin{lem}\label{holreg}
 If $f$ and $\varphi$ are $C^R$ then $$\hol
 ^x_{x_0}\varphi: \mathcal{W}^{su}(x_0) \to \mathbb{R}$$ is uniformly $C^R$.
 
  If further $f$ is strongly $R$-bunched then $$\hol
  ^{\pi^cx}_{x}\varphi: \mathcal{W}^c(x_0) \to \mathbb{R}$$ is uniformly $C^R$.

\end{lem}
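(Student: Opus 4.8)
The plan is to treat the two assertions separately. For the first, I would show that the partial sums $S_N(x) = \sum_{n=0}^{N} \varphi(f^n x_0) - \varphi(f^n x)$ (for $x\in\mathcal{W}^{ss}(x_0)$, and the analogous backward sums on the unstable side) converge in $C^R$, uniformly along the leaf. The key point is that differentiating along the leaf $\mathcal{W}^{ss}(x_0)$ and using the chain rule produces factors of $D^sf$ contracting the derivatives of the iterates, while the higher-order terms are controlled by the $R$-bunching inequalities $\lVert D^s_xf\rVert < m(D^c_xf)\lVert D^c_xf\rVert^{-R}$ and $m(D^u_xf) > \lVert D^c_xf\rVert m(D^c_xf)^{-R}$ exactly as in the standard argument that stable/unstable holonomies of $\varphi$ are $C^R$ (this is the content of the cited Pugh--Shub--Wilkinson type estimates). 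Summing a geometric series then gives a uniform $C^R$ bound on $\hol_{x_0}^x\varphi$ restricted to a single $ss$- or $uu$-leaf; concatenating finitely many such legs along an $su$-sequence and invoking Journé's lemma (Remark \ref{suregular}) promotes this to uniform $C^R$ regularity of $x\mapsto\hol_{x_0}^x\varphi$ on the whole $su$-leaf $\mathcal{W}^{su}(x_0)$. Here \emph{uniformly $C^R$} means: the leaf is covered by charts in which the function, expressed in the leaf coordinates, has all partial derivatives up to order $R$ bounded by a constant independent of $x_0$ and of the chart — this is the notion I would spell out immediately after the statement, as promised.

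For the second assertion I would combine the first with the additivity (Add.) and the fundamental identity (FT). Write, for $x\in\mathcal{W}^c(x_0)$,
\begin{equation*}
  \hol_x^{\pi^c x}\varphi = \hol_{x_0}^{\pi^c x}\varphi - \hol_{x_0}^{x}\varphi + \hol_{x_0}^{x_0}\varphi,
\end{equation*}
so the dependence on $x$ splits into a term on the fixed $su$-leaf $\mathcal{W}^{su}(x_0)$ precomposed with the holonomy $\pi^c$, plus a term that is itself a holonomy functional evaluated at points moving transversally. The first assertion controls $\hol_{x_0}^{\,\cdot\,}\varphi$ as a $C^R$ function on $\mathcal{W}^{su}(x_0)$, and the central-translation lemma gives that $\pi^c$ is uniformly $C^R$ along the leaves of $\mathcal{W}^c$; composing, $\hol_{x_0}^{\pi^c x}\varphi$ is uniformly $C^R$ in $x\in\mathcal{W}^c(x_0)$. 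For the remaining piece one re-runs the summation estimate but now with $x$ ranging over the \emph{central} leaf: differentiating $\sum_n \varphi(f^n x_0) - \varphi(f^n x)$ in the central direction produces factors of $D^cf$, and it is precisely the \emph{strong} bunching hypothesis $\max\{\lVert D^s f\rVert, m(D^uf)^{-1}\} < \min\{m(D^cf)^R, \lVert D^cf\rVert^{-R}\}$ that forces the series of $R$-th derivatives to converge geometrically (ordinary $R$-bunching is not enough because there is no automatic contraction along the center).

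The main obstacle I expect is this last convergence estimate: controlling the $C^R$ norm of the central-direction partial sums requires a Faà di Bruno bookkeeping of the derivatives of $f^n$ along $\mathcal{W}^c$, where each derivative of order $k\le R$ contributes a product of conorms/norms of $D^cf$ along the orbit, and one must check that the worst such product is still dominated by the contraction/expansion coming from the stable/unstable mismatch $\varphi(f^nx_0)-\varphi(f^nx)$ — which is where strong $R$-bunching enters and where the distinction between $R$-bunched and strongly $R$-bunched becomes essential. The rest is routine: geometric summation, uniformity of the constants (following from compactness of $\mathbb{T}^d$ and the uniform hyperbolicity/bunching bounds), and an appeal to Journé to patch the $ss$- and $uu$-legs together.
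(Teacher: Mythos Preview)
Your first part is essentially the paper's argument: convergence of the partial sums on $\mathcal{W}^{ss}$ and $\mathcal{W}^{uu}$ separately, then Journé. (Minor point: no bunching is needed there; differentiation is along the stable/unstable leaf, so every factor $D^sf^n$ or $(D^uf^n)^{-1}$ already contracts.)

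The second part has a genuine gap. Your additive decomposition
\[
  \hol_x^{\pi^c x}\varphi \;=\; \hol_{x_0}^{\pi^c x}\varphi \;-\; \hol_{x_0}^{x}\varphi
\]
is not well defined: for $x\in\mathcal{W}^c(x_0)$ with $x\neq x_0$, the point $x$ lies on a \emph{different} $su$-leaf from $x_0$, and $\pi^c x\in\mathcal{W}^{su}(x)\cap\mathcal{C}$ also does not lie on $\mathcal{W}^{su}(x_0)$. So neither $\hol_{x_0}^{x}\varphi$ nor $\hol_{x_0}^{\pi^c x}\varphi$ exists, and there is no ``fixed $su$-leaf'' to work on. The fallback you sketch --- directly estimating $\sum_n \varphi(f^n x_0)-\varphi(f^n x)$ with $x$ moving in the center --- fails for a more basic reason: that series does not converge at all, since there is no contraction between $f^n x_0$ and $f^n x$ along the center. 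Strong bunching controls ratios of rates, not the zeroth-order terms.

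The paper takes a different route that sidesteps this entirely. It passes to the skew product $\tilde f(x,t)=(fx,\,t+\varphi(x))$ on $M\times\mathbb{R}$; strong $R$-bunching of $f$ is exactly what makes $\tilde f$ partially hyperbolic and $R$-bunched (this is where Wilkinson's observation enters). One then checks that the $su$-leaf of $\tilde f$ through $(x,t)$ is the graph $\{(y,\,t+\hol_x^y\varphi):y\in\mathcal{W}^{su}_f(x)\}$, so the $su$-holonomy of $\tilde f$ between the center leaves $\mathcal{W}^c_f(x_0)\times\mathbb{R}$ and $\mathcal{W}^c_f(\pi^c x_0)\times\mathbb{R}$ is precisely $(x,t)\mapsto(\pi^c x,\,t+\hol_x^{\pi^c x}\varphi)$. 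Now the standard $C^R$ regularity of holonomies for $R$-bunched systems gives the conclusion in one stroke. The advantage of this packaging is that all the Faà~di~Bruno bookkeeping you anticipate is already absorbed into the cited holonomy-regularity theorem; your direct approach would amount to reproving that theorem in this special case, and as written it starts from an identity that does not hold.
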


By uniformly $C^R$ we mean that all partial derivatives (and the
Hölder constants for the highest order derivatives if $R$ is not an
integer) are bounded both along $\mathcal{W}^{\sigma}(x_0)$ and as we vary $x_0\in \mathbb{T}^d$.

\begin{proof}

For first part, if we restrict our attention to $\mathcal{W}^{ss}(x)$ and
$\mathcal{W}^{uu}(x)$ then it is just a simple calculation. Then using Journé
\cite{Journe1988RegularityLemmaFunctions} we have the result.

For the second part strongly $R$-bunched implies that the $C^R$ cocycle dynamics $\tilde{f}:M\times
  \mathbb{R} \to M\times
  \mathbb{R} $ 
\begin{equation*}
  \tilde{f}(x,t)=(fx,t+\varphi(x))
\end{equation*}
 is partially hyperbolic $R$-bunched (see \cite{Wilkinson2013CohomologicalEquationPartially}).

If $y\in \mathcal{W}^{ss}(x)$ we can show that
 $d(\tilde{f}^n(x,0),\tilde{f}^n(x,\hol_{x}^y\varphi))$ goes to $0$
 exponentially fast with and easy computation. We have an analogous
 result for
  $y\in \mathcal{W}^{uu}(x)$, so $$\mathcal{W}^{su}_{\tilde{f}}(x,t)= \left\{ (y,
     \hol_{x}^y\varphi+t) :
     x\in \mathcal{W}^{su}_f(x) \right\}.$$

   So, since the $C^R$ holonomy map from $\mathcal{W}^c_f(x_0)\times \mathbb{R}$
   to 
   $\mathcal{W}^c_f(\pi^cx_0)\times \mathbb{R}$ is 
\begin{equation*}
  (x, t) \mapsto \left( \pi^cx, \hol_{x}^{\pi^cx}\varphi+t\right),
\end{equation*} we get that $\hol_{x}^{\pi^cx}\varphi$ is $C^R$ as a map from
$\mathcal{W}^{c}_f(x_0)$ to $\mathbb{R}$.
\end{proof}

\section{Central Translations are Diophantine}

For most of this section
we will restrict our attention to the case where $f=F\in
SL(d, \mathbb{Z})$ is a linear automorphism. To emphasize the difference within the
linear and not linear case (as both cases will eventually coexist) we
will call the splitting for the linear map $F$ of $S\oplus C \oplus U$ and use
$a$ to refer to the points of $\mathbb{T}^d$ or $\mathbb{R}^d$ when $F$ is the map
acting there.

\subsection{Katznelson Irreducibility}\label{katznelson}

\begin{Def}
  
  We say that $F\in SL(d, \mathbb{Z})$ partially hyperbolic with
  splitting $S\oplus C \oplus U$ is \emph{Katznelson irreducible} if $C \cap \mathbb{Z}^d
  = \{0\}$.

\end{Def}

We remark that this definition depends on the choice of the splitting,
and in many situations the optimal splitting would be when $C$ is the
generalized eigenspace of eigenvalues of modulus $1$. In the latter case
Katznelson irreducibility is just ergodicity (see below).

 Notice that by partial hyperbolicity
  $F|_C$ and $F_{S\oplus U}$ have no common eigenvalues, so this condition
  is enough to apply Katznelson's lemma
  \cite[Lemma 3]{Katznelson1971ErgodicAutomorphismsTn}.

To compare with more algebraic definition of irreducibility we give
the following equivalence:

\begin{lem}\label{poly}

  A partial hyperbolic automorphism $F\in SL(d, \mathbb{Z})$ is Katznelson
  irreducible if and only there is no rational factor
  $\eta(x) \in  \mathbb{Q} [x]$ of its characteristic polynomial having only
  eigenvalues of $F|_C$ as roots.

\end{lem}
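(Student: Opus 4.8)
The plan is to translate the condition $C\cap\mathbb{Z}^d=\{0\}$ into the language of the characteristic polynomial $p(x)\in\mathbb{Z}[x]$ of $F$, via the standard correspondence between $F$-invariant \emph{rational} subspaces of $\mathbb{R}^d$ and monic integer divisors of $p$, together with the fact already noted above that by partial hyperbolicity $F|_C$ and $F|_{S\oplus U}$ have no common eigenvalue. The one preliminary structural point I would establish is that $C$ is a sum of full complex generalized eigenspaces: writing $\mathbb{C}^d=\bigoplus_\mu E_\mu$ for the generalized eigenspace decomposition, any $F$-invariant subspace $W$ satisfies $W=\bigoplus_\mu(W\cap E_\mu)$, so applying this to $C$ and to $S\oplus U$ and using $\mathbb{C}^d=C\oplus(S\oplus U)$ gives $E_\mu=(C\cap E_\mu)\oplus((S\oplus U)\cap E_\mu)$ for each $\mu$; if $\mu$ is an eigenvalue of $F|_C$ then $C\cap E_\mu\neq 0$, hence $\mu$ is not an eigenvalue of $F|_{S\oplus U}$, hence $(S\oplus U)\cap E_\mu=0$ and $E_\mu\subseteq C$. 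Thus $C=\bigoplus_{\mu\in\Lambda}E_\mu$ where $\Lambda$ is the set of eigenvalues of $F|_C$; in particular any $F$-invariant subspace whose eigenvalues all lie in $\Lambda$ is automatically contained in $C$.

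For the direction "not Katznelson irreducible $\Rightarrow$ such a factor exists", take $v\in(C\cap\mathbb{Z}^d)\setminus\{0\}$ and let $V$ be the $\mathbb{Q}$-span of $\{F^kv:k\in\mathbb{Z}\}$. This is a nonzero rational $F$-invariant subspace with $V\subseteq C$ (since $C$ is $F$-invariant and contains $v$). Choosing an integer basis of the full-rank lattice $V\cap\mathbb{Z}^d$, the matrix of $F|_V$ is integral, so its characteristic polynomial $\eta$ is monic in $\mathbb{Z}[x]$ of degree $\dim V\ge 1$, divides $p$ (complete a basis of $V$ to a basis of $\mathbb{R}^d$ to make $F$ block upper triangular), and has all its roots among the eigenvalues of $F|_V$, which lie in $\Lambda$. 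So $\eta$ is a rational factor of $p$ with only eigenvalues of $F|_C$ as roots.

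Conversely, suppose $\eta\in\mathbb{Q}[x]$ is a factor of $p$ all of whose roots lie in $\Lambda$. Scaling we may take $\eta$ monic, hence $\eta\in\mathbb{Z}[x]$ by Gauss's lemma, so $\eta(F)$ is an integer matrix and $W:=\ker\eta(F)$ is an $F$-invariant rational subspace. It is nonzero because every root of $\eta$ is a genuine eigenvalue of $F$, and it is contained in the sum of the generalized eigenspaces of the roots of $\eta$, so by the structural point $W\subseteq C$. A nonzero rational subspace contains a nonzero lattice point, so $C\cap\mathbb{Z}^d\neq\{0\}$ and $F$ is not Katznelson irreducible. Combining the two implications gives the equivalence. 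The only genuinely delicate step is the structural point in the first paragraph: this is exactly where partial hyperbolicity enters (through the disjointness of the spectra of $F|_C$ and $F|_{S\oplus U}$), and without it $C$ could cut a generalized eigenspace in two and the correspondence with integer factors of $p$ would fail; the remaining ingredients — that the characteristic polynomial of a restriction divides $p$, and that rational subspaces contain lattice points — are routine.
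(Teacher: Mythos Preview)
Your proof is correct and follows essentially the same approach as the paper: in one direction you take the cyclic rational subspace generated by a nonzero lattice point in $C$ and read off the characteristic polynomial of the restriction, and in the other you take $\ker\eta(F)$ as a nonzero rational subspace inside $C$. The paper's proof is a two-line version of exactly this argument; your main addition is the explicit ``structural point'' that $C$ is the full sum of the generalized eigenspaces for the eigenvalues of $F|_C$, which is what justifies the containment $\ker\eta(F)\subseteq C$ that the paper states without comment (it is tacitly relying on the disjointness of spectra noted just before the definition).
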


\begin{proof}

  Indeed if $\bm{p} \in C \cap \mathbb{Z}^d \setminus\{0\}$ then
  $Q =\operatorname{span}\{F^i\bm{p}\}_{i \in \mathbb{Z}} \subset C$ is a rational
  subspace and the characteristic polynomial of $F|_Q$ has only
  eigenvalues of $F|_C$ as roots.

  On the other direction $\ker \eta (F) \subset C$ is a rational subspace and
  so it has integer vectors.
\end{proof}

From this lemma the following implications are obvious for $F\in
SL(d,\mathbb{R})$ 
\begin{equation*}
  \text{Irreducibility} \implies \text{Katznelson Irreducibility}
  \implies \text{Ergodicity}.
\end{equation*} and none of the reverse implications are true, except
in the case where $F|_C$ has only eigenvalues of modulus $1$, in which
case
\begin{equation*}
  \text{Katznelson Irreducibility} \iff \text{Ergodicity}
\end{equation*} since a rational polynomial can have only roots of
modulus $1$ if those roots are roots of unity.

We show now that this condition is actually necessary to solve the
cohomological equation under our hypotheses.
\begin{thm}\label{reciprocal}

  If for $F\in SL(d,\mathbb{Z})$ the equation \eqref{eq:cohomological} can be
  solved for any \(\varphi\) with trivial periodic cycle functionals, then it is Katznelson irreducible.

\end{thm}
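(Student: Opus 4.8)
The plan is to prove the contrapositive: assuming $F$ is \emph{not} Katznelson irreducible, I will exhibit a $C^{\infty}$ function $\varphi$ with trivial periodic cycle functionals for which \eqref{eq:cohomological} has no solution.

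First I would pass to the transpose $F^{T}$. By Lemma~\ref{poly}, failure of Katznelson irreducibility gives a rational factor $\eta(x)\in\mathbb{Q}[x]$ of $\mathrm{char}_{F}(x)$ whose roots are all eigenvalues of $F|_{C}$. Since $F$ and $F^{T}$ have the same characteristic polynomial, the same splitting dimensions, and hence the same center eigenvalues, $\eta$ is such a factor for $F^{T}$ as well, so $\ker\eta(F^{T})$ is a nonzero rational subspace contained in the center subspace $C'$ of $F^{T}$; pick a nonzero integer vector $\bm{k}_{0}\in C'\cap\mathbb{Z}^{d}$. The reason to work with $F^{T}$ is that the generalized eigenspace of $F$ for an eigenvalue $\lambda$ is orthogonal to the generalized eigenspace of $F^{T}$ for any eigenvalue $\mu\neq\lambda$, so $C'=(S\oplus U)^{\perp}$; in particular $\bm{k}_{0}\perp S$ and $\bm{k}_{0}\perp U$.

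Next I would take as test function the real, mean-zero trigonometric polynomial $\varphi(x)=e^{2\pi i\langle\bm{k}_{0},x\rangle}+e^{-2\pi i\langle\bm{k}_{0},x\rangle}$. For the linear map $F$ the leaves of $\mathcal{W}^{ss}$ and $\mathcal{W}^{uu}$ through $x$ are $x+S$ and $x+U$, and since $S,U$ are $F$-invariant and annihilated by $\bm{k}_{0}$, each character $e^{\pm 2\pi i\langle\bm{k}_{0},\cdot\rangle}$ is constant along every stable and every unstable leaf; hence $\hol_{x}^{x^{+}}$ and $\hol_{x}^{x^{-}}$ kill it term by term, and by additivity and linearity $PCF_{\gamma}\varphi=0$ for every $su$-cycle $\gamma$. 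So $\varphi$ is an admissible test function. Finally I would rule out a solution by Fourier analysis. Suppose $u\circ F-u=\varphi-\mathrm{const}$ for some continuous (or merely integrable) $u$. Comparing Fourier coefficients at a frequency $\bm{k}\neq0$ yields $\hat{u}((F^{T})^{-1}\bm{k})-\hat{u}(\bm{k})=\hat{\varphi}(\bm{k})$, and, writing $\bm{k}_{n}=(F^{T})^{n}\bm{k}_{0}$ and summing over $a\leq n\leq b$, the left-hand side telescopes to $\hat{u}(\bm{k}_{a-1})-\hat{u}(\bm{k}_{b})$ while the right-hand side equals $\#\{a\le n\le b:\bm{k}_{n}=\pm\bm{k}_{0}\}$, a nonnegative integer that is at least $1$ whenever $0\in[a,b]$. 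If the $F^{T}$-orbit of $\bm{k}_{0}$ is infinite, letting $a\to-\infty$, $b\to+\infty$ forces $\hat{u}(\bm{k}_{a-1})-\hat{u}(\bm{k}_{b})\to0$ by Riemann--Lebesgue, contradicting that it equals a fixed positive integer; if the orbit is finite of size $m$, summing over one full period ($a=0$, $b=m-1$, so $\bm{k}_{a-1}=\bm{k}_{m-1}=\bm{k}_{b}$) makes the left side exactly $0$, again contradicting positivity. Hence no such $u$ exists, and \eqref{eq:cohomological} is not solvable for this $\varphi$.

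The step I expect to be the crux is the passage to $F^{T}$ together with the identification $C'=(S\oplus U)^{\perp}$: this is precisely what makes the character $\varphi$ satisfy the periodic-cycle hypothesis (constancy along \emph{both} stable and unstable leaves), which is the condition we are not permitted to cheat on; everything after that is a routine Fourier telescoping, only mildly complicated by having to separate the finite- and infinite-orbit cases.
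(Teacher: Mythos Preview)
Your proof is correct and takes a genuinely different route from the paper's. The paper works on the ``primal'' side: from the rational factor $\eta$ it produces $V=\ker\eta(F)\subset C$ and the complementary rational $F$-invariant subspace $W=\ker(\chi_F/\eta)(F)\supset S\oplus U$, passes to the quotient torus $\mathbb{T}^d/W$, and exhibits a function there (hence constant along $S\oplus U$, so with trivial PCF) that separates two periodic points of $F$; the obstruction is the classical periodic-data one. You work on the dual side: passing to $F^{T}$ and using $C'=(S\oplus U)^{\perp}$ you locate an integer \emph{frequency} $\bm{k}_0$ orthogonal to $S\oplus U$, take an explicit trigonometric $\varphi$, and obstruct via a Fourier telescoping plus Riemann--Lebesgue. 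The two pictures match up exactly, since $\ker\eta(F^{T})=W^{\perp}$, so your characters are precisely the Fourier modes of functions on $\mathbb{T}^d/W$. What your approach buys: an explicit smooth test function, no need to replace $F$ by a power to manufacture fixed points, and an obstruction that already rules out $L^1$ (not just continuous) solutions. What the paper's approach buys: a more geometric statement (``two periodic orbits with different averages'') that makes the obstruction transparent without any Fourier bookkeeping.
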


\begin{proof}

  If $F$ is not Katznelson irreducible, then take $\eta(x)\in \mathbb{Q}[x]$ the
  factor of the characteristic polynomial $\chi_{F}(x)$ with only eigenvalues of
  $F|_{C}$ as roots.

  Then $V=\ker \eta(F) \subset C $ and
  $W=\ker \frac{\chi_{F}}{\eta}(F) \supset S\oplus U$ are invariant sub-tori.  By taking the quotient
  $\tilde{V}= \mathbb{T}^d/ W$ we get a torus finitely covered by $V$.

  By taking a power of $F$ we can assume that there are two different
  fixed points in $V$ that project to different points in $\tilde{V}$.
  By defining any function in $\tilde{V}$ that gives different values
  to such projections, we can extend that function to $\mathbb{T}^d$ by making
  it constant along $W$.

  Such function clearly has trivial periodic cycle functional while it cannot be
  cohomologus to a constant since it gives different values to two
  different fixed points.
\end{proof}

\subsection{Using Katznelson to Show a Diophantine Property}\label{linear-thm}

 For $v\in \mathbb{R}^d= S\oplus C \oplus
U$ we call $v^c\in C$ its projection along $S\oplus U$. If $\dim C = c$ we have,
after perhaps rearranging the basis, that $e_1^c,\dots,e^c_c$ is a basis of
$C$.

Using this basis to identify $C$ with $\mathbb{R}^c$ we can write the
projection to $C$ as a linear map $P:\mathbb{R}^d\to \mathbb{R}^c$, with $P(e_i)=e_i$
for $i=1,\dots,c$. With this notations we can write (in the linear case) the central
translations defined in \S~\ref{sec:trans} as $T_na=a+Pn$, and we can show that they satisfy the
following Diophantine condition:
\begin{Def}\label{diophantine condition}

  Let a linear map $P:\mathbb{Z}^d\to \mathbb{R}^c$ induce a
  $\mathbb{Z}^d$-action by translations ($n\cdot a = a + Pn$). We say that such
  $P$ is \emph{Diophantine} if, for some constants $K>0$ and $\tau>0$,
  for any integer vector $q\in \mathbb{Z}^c\setminus\{0\}$ there is some
  $i=1,\dots,d$ so that for any $p\in \mathbb{Z}$
\begin{equation*}
  \lvert Pe_i\cdot q - p \rvert > \frac{K}{\lvert q \rvert ^{\tau}}.
\end{equation*}
\end{Def}

After showing this we will show, using Moser in \S~\ref{fede linearization},
that the general central translations (of a map $f$ that is a jointly
integrable perturbation of a linear map $F$ instead of a linear map
itself) is conjugated to the Diophantine translations of the nearby
linear map.

\begin{thm}\label{katzthendio}

  For a Katznelson irreducible automorphism $F\in SL(d, \mathbb{Z})$, writing its
  central translations as $T_na=a+Pn$ as describe before we have that
  $P$ is Diophantine with $\tau=c$, the dimension of the central
  distribution for the partial hyperbolic splitting. That is
  \begin{equation*} \lvert
    Pe_i\cdot q - p \rvert > \frac{K}{\lvert q \rvert ^{c}}
\end{equation*} with the existential quantifiers of definition
\ref{diophantine condition}.

\end{thm}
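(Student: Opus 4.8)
The plan is to derive the Diophantine inequality directly from Katznelson's lemma, which (given that $F|_C$ and $F|_{S\oplus U}$ share no eigenvalues, as guaranteed by partial hyperbolicity and Katznelson irreducibility $C\cap\mathbb{Z}^d=\{0\}$) provides a lower bound on how close an integer vector can come to the central subspace $C$ after iteration. Concretely, Katznelson's lemma gives constants so that for every nonzero $m\in\mathbb{Z}^d$ the distance from $m$ to $C$ — equivalently, the size of its $S\oplus U$ component — is bounded below by a power of $\lvert m\rvert$; the exponent is controlled by $c=\dim C$. I would first record that lemma in the precise quantitative form needed here.

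Next I would translate the Diophantine condition of Definition~\ref{diophantine condition} into the language of integer vectors in $\mathbb{Z}^d$. The quantity $Pe_i\cdot q$ is the pairing of $q\in\mathbb{Z}^c$ with the central projection of the basis vector $e_i$; since $P$ is projection along $S\oplus U$ onto $C\cong\mathbb{R}^c$, having $\lvert Pe_i\cdot q - p\rvert$ small for all $i$ and suitable $p\in\mathbb{Z}$ means that the linear functional $x\mapsto x\cdot q$ on $C$, pulled back to $\mathbb{Z}^d$ via $P$, is simultaneously close to an integer on every generator $e_i$. I would package $q$ together with the best integer approximations $p_i$ into a single vector and observe that smallness of all these quantities forces a nonzero integer combination built from $q$ to lie very close to the annihilator of $C$ inside $(\mathbb{R}^d)^*$ — or dually, forces an integer vector to lie close to $C$ itself. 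The point is to manufacture, from a hypothetical bad $q$, an integer vector $m\in\mathbb{Z}^d$ whose $S\oplus U$-component is smaller than Katznelson's bound allows, while keeping $\lvert m\rvert$ polynomially controlled by $\lvert q\rvert$; combining with the lemma yields the exponent $\tau=c$.

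The main obstacle I expect is the bookkeeping in this transfer step: making sure that the integer vector $m$ one extracts is genuinely nonzero (this is exactly where Katznelson irreducibility $C\cap\mathbb{Z}^d=\{0\}$ must be used, to rule out the degenerate case where the approximation is ``too good'' simply because the relevant lattice point already sits in $C$), and tracking how the exponent transforms — one must verify that the power of $\lvert m\rvert$ coming out of Katznelson, together with the polynomial bound $\lvert m\rvert\lesssim\lvert q\rvert^{O(1)}$, collapses to precisely $c$ rather than some larger constant. A secondary subtlety is the choice of norms and the passage between $q\in\mathbb{Z}^c$, the dual pairing on $C$, and integer vectors in $\mathbb{Z}^d$; I would fix the identification $C\cong\mathbb{R}^c$ via $e_1^c,\dots,e_c^c$ once and for all and carry all estimates in the sup norm to keep the exponents transparent. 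Once the bad vector $m$ is produced and shown nonzero, the contradiction with Katznelson's lemma is immediate, and the stated inequality with $\tau=c$ follows with $K$ determined by the Katznelson constants and the (fixed) geometry of the splitting.
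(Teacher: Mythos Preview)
Your overall plan—reduce the Diophantine inequality to a Katznelson-type lower bound on the distance from an integer vector to an invariant subspace—is exactly right, and matches the paper's strategy. The bookkeeping you anticipate (nonvanishing of the integer vector, converting $|\bm p|$ back to $|q|$) is also handled essentially as you describe.

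There is, however, a genuine imprecision at the key step. You write that a bad $q$ forces an integer vector close to ``the annihilator of $C$'' or ``to $C$ itself''. Neither is the correct subspace. Unwinding your own description: the functional $x\mapsto (Px)\cdot q$ on $\mathbb{R}^d$ equals $x\mapsto x\cdot P^tq$, and $P^tq\in\operatorname{Im}P^t=(\ker P)^\perp=(S\oplus U)^\perp$. So the integer vector $\bm p=(p_1,\dots,p_d)$ you build is close to $(S\oplus U)^\perp$, which is the annihilator of $S\oplus U$, not of $C$, and which coincides with $C$ only when the splitting is orthogonal. This distinction matters because $(S\oplus U)^\perp$ is \emph{not} $F$-invariant; it is $F^t$-invariant. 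The paper therefore applies Katznelson's lemma to $F^t$, after first checking (via the polynomial characterization of Katznelson irreducibility, Lemma~\ref{poly}) that $(S\oplus U)^\perp\cap\mathbb{Z}^d=\{0\}$ and that $F^t|_{(S\oplus U)^\perp}$ and $F^t|_{C^\perp}$ share no eigenvalues—this is recorded as Lemma~\ref{diophantine-ineq}. Once that is in place one has $d(\bm p,(S\oplus U)^\perp)>\hat K/|\bm p|^c$, and since $\|P^tq-\bm p\|$ bounded forces $|\bm p|\lesssim|q|$ linearly (not just polynomially), the exponent $c$ survives unchanged.

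Had you tried to push through with $C$ or with $C^\perp$ directly, the argument would stall: the integer vector you actually produce is not close to $C$, and for $C^\perp$ there is no reason to expect $C^\perp\cap\mathbb{Z}^d=\{0\}$ from the hypothesis. So the passage to $F^t$ and the subspace $(S\oplus U)^\perp$ is the one concrete idea your outline is missing.
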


During the proof we will actually use Katznelson's lemma for $F^t$.
\begin{lem}\label{diophantine-ineq}

  If $F\in SL(d,\mathbb{Z}^d)$ is Katznelson irreducible then for any integer
  vector $\bm{p}\in \mathbb{Z}^d$ \begin{equation*}
 d(\bm{p},(S\oplus U)^{\perp}) > \frac{\hat{K}}{\lvert \bm{p} \rvert^{c}}.
\end{equation*}

\end{lem}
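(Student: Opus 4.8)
The plan is to obtain this as Katznelson's lemma \cite[Lemma~3]{Katznelson1971ErgodicAutomorphismsTn} applied to the transpose $F^{t}$, whose invariant subspaces are the orthogonal complements of those of $F$. Set $E:=(S\oplus U)^{\perp}$ and let $E':=(C\oplus U)^{\perp}\oplus(S\oplus C)^{\perp}$ be the complementary subspace. The first step is purely formal: $F^{t}\in SL(d,\mathbb{Z})$; each of $(C\oplus U)^{\perp}$, $(S\oplus U)^{\perp}$, $(S\oplus C)^{\perp}$ is $F^{t}$-invariant, being the orthogonal complement of an $F$-invariant subspace; together they form a genuine direct-sum decomposition $\mathbb{R}^{d}=(C\oplus U)^{\perp}\oplus(S\oplus U)^{\perp}\oplus(S\oplus C)^{\perp}$, namely the one dual to $\mathbb{R}^{d}=S\oplus C\oplus U$; and the eigenvalues of $F^{t}$ on these three pieces coincide with those of $F|_{S}$, $F|_{C}$, $F|_{U}$ respectively. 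In particular $F^{t}$ is partially hyperbolic with center $E=(S\oplus U)^{\perp}$, which has dimension $c$, and $F^{t}|_{E}$ has no eigenvalue in common with $F^{t}|_{E'}$ — this last point is exactly where the partial hyperbolicity of $F$ enters.

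Second, I would check that $F^{t}$ is again Katznelson irreducible, i.e.\ $E\cap\mathbb{Z}^{d}=\{0\}$. By Lemma~\ref{poly} applied to $F^{t}$ this is equivalent to the statement that no rational factor of $\chi_{F^{t}}$ has all of its roots among the eigenvalues of $F^{t}|_{E}$; but $\chi_{F^{t}}=\chi_{F}$ and the eigenvalues of $F^{t}|_{E}$ are precisely those of $F|_{C}$, so this is verbatim the criterion of Lemma~\ref{poly} for $F$ to be Katznelson irreducible, which is our hypothesis.

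Finally, Katznelson's lemma applied to the integer matrix $F^{t}$ and its invariant subspace $E$ — which contains no nonzero integer vector and on which the restricted map shares no eigenvalue with the restriction to the complementary invariant subspace $E'$ — produces a constant with $d(\mathbf{p},E)\geq\hat{K}\,|\mathbf{p}|^{-\dim E}$ for every nonzero $\mathbf{p}\in\mathbb{Z}^{d}$; since $\dim E=\dim(S\oplus U)^{\perp}=c$ and one may replace $\hat{K}$ by a slightly smaller constant to make the inequality strict, this is exactly the claim. The only non-mechanical observations are that both the spectral-gap hypothesis and Katznelson irreducibility are symmetric under transposition, the latter through Lemma~\ref{poly}; with those in place everything else is Katznelson's lemma used as a black box, so I do not expect a genuine obstacle here.
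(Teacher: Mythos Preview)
Your proposal is correct and follows essentially the same route as the paper: apply Katznelson's lemma to $F^{t}$, observe that $F^{t}|_{(S\oplus U)^{\perp}}$ has the eigenvalues of $F|_{C}$ while $F^{t}|_{C^{\perp}}$ has those of $F|_{S\oplus U}$ (so no common eigenvalue by partial hyperbolicity), and use Lemma~\ref{poly} to transfer Katznelson irreducibility to the transpose. Your complementary subspace $E'=(C\oplus U)^{\perp}\oplus(S\oplus C)^{\perp}$ is in fact equal to the paper's $C^{\perp}$, so even that apparent difference is cosmetic.
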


\begin{proof}

  Of course we just need to show that we can apply Katznelson's lemma.
  Namely we need to show $(S\oplus U)^{\perp} \cap \mathbb{Z}^d
  = \{0\}$ and that $F^t|_{(S\oplus U)^{\perp}}$ and $F^t|_{C^{\perp}}$ have no common
  eigenvalue.

  This is clear from the characterization by polynomials in
  lemma~\ref{poly} and the fact that $F^t|_{(S\oplus U)^{\perp}}$ and $F|_C$,
  and  $F^t|_{C^{\perp}}$ and $F|_{S\oplus U}$ have the
  same eigenvalues.
\end{proof}

\begin{proof}[Proof of Theorem~\ref{katzthendio}]
 The Diophantine condition is equivalent to the existence of an $i$ so that
\begin{equation*}
  \lvert e_i\cdot P^tq - p \rvert > \frac{K}{\lvert q \rvert^{\tau}}
\end{equation*} which in turn is equivalent to
\begin{equation*}
  \lVert P^t q - \bm{p} \rVert > \frac{K}{\lvert q \rvert^{\tau}}
\end{equation*} for any $\bm{p} \in \mathbb{Z}^d$ (indeed picking each coordinate
of $\bm{p}$ to be the closest integer to the respective coordinate of
$P^tq$ is easy to conclude the former inequality).

But (using lemma \ref{diophantine-ineq} for the last inequality)
\begin{equation*}
   \lVert P^t q - \bm{p} \rVert \ge d(\bm{p}, \operatorname{Im}P^t)=d(\bm{p},
   (\operatorname{Ker}P)^\perp)= d(\bm{p},(S\oplus U)^{\perp}) )> \frac{\hat{K}}{\lvert \bm{p} \rvert^{c}}.
\end{equation*}
In summary
\begin{equation*}
  \lVert P^t q - \bm{p} \rVert>  \frac{\hat{K}}{\lvert \bm{p} \rvert^{c}}.
\end{equation*}

Now we need to change the $\bm{p}$ for the $q$ in the right hand expression. If $\lVert P^t q - \bm{p}
\rVert>0.5$ we can just adjust the $K$ to be smaller than $0.5$ (notice
$\lvert q \rvert \geq 1$), otherwise we have that  the norms of
$P^tq$ and $\bm{p}$ cannot be too far, so $\lvert \bm{p} \rvert < 2
\lvert P^tq \rvert < 2 \lVert P^t \rVert\lvert q \rvert$, finally concluding
\begin{equation*}
  \lVert P^t q - \bm{p} \rVert>  \frac{\hat{K}}{\left( 2 \lVert P^t \rVert\right)^c}\frac{1}{\lvert \bm{q} \rvert^{c}}.
\end{equation*}
\end{proof}

\subsection{Using Moser to Linearize the Central Translations}\label{fede linearization}

\begin{thm}\label{lin}

  For a chosen integer regularity $\hat{k}\in \mathbb{N}$, if $f \in C^{R}$ is $C^{6c+1}$-close to $F$
  and $F$ is $R$-bunched, with $R\ge3\hat{k}+3c
  + 2$ and $R \ge9c+4$, there exist $h: \mathcal{C} \to
  \mathbb{R}^c$, which is $C^{\hat{k}}$, so
  that 
\begin{equation*}
  h\circ T_n \circ h^{-1}(x)= x+ Pn 
\end{equation*} where $P: \mathbb{Z}^d \to \mathbb{R}^c$ is exactly as defined in
\S~\ref{linear-thm} for the nearby linear map $F$.

\end{thm}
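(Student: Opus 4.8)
The plan is to apply Moser's theorem on commuting circle diffeomorphisms (more precisely, its higher-dimensional analogue for commuting maps with a Diophantine rotation vector, as in \cite{Moser1990CommutingCircleMappings}) to the projected central translations $\tilde T_n = \Pi\circ T_n\circ\Pi^{-1}:C\to C$. First I would record the setup: by Theorem~\ref{katzthendio} the linear action $a\mapsto a+Pn$ of the nearby map $F$ is Diophantine in the sense of Definition~\ref{diophantine condition} with exponent $\tau=c$; the maps $T_n$ for $f$ commute (they form a $\mathbb Z^d$-action because $T_n x=\pi^c(x+n)$ and the stable/unstable holonomies commute, Remark~\ref{suregular}); and by the lemma in \S~\ref{sec:trans}, since $F$ is $R$-bunched and $f\in C^R$ is $C^1$-close to it, the $\tilde T_n$ are uniformly $C^R$ diffeomorphisms of $C$. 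Because $f$ is $C^{6c+1}$-close to $F$, each $\tilde T_n$ is $C^{6c+1}$-close (indeed $C^0$- or $C^1$-close after controlling the generators) to the linear translation $a\mapsto a+Pn$ of $F$; it suffices to check closeness on a finite generating set $e_1,\dots,e_d$ of $\mathbb Z^d$ and then propagate, keeping track of how composition degrades closeness.

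The core step is then the conjugacy (linearization) itself. I would invoke the version of the KAM/Moser scheme that says: a $\mathbb Z^d$-action by $C^R$ diffeomorphisms of $\mathbb R^c$ (or $\mathbb T^c$) that is $C^{s_0}$-close to a linear translation action with Diophantine vector of exponent $\tau$ admits a $C^{\hat k}$ conjugacy to that linear action, provided $R$ exceeds a loss-of-derivatives threshold of the shape $R\ge a\hat k + b\tau + \text{const}$ and the closeness is measured in $C^{s_0}$ with $s_0$ of size $O(\tau)$. Matching constants, Moser's scheme (iterating a homological equation whose small divisors are exactly the $\lvert Pe_i\cdot q-p\rvert$, for which Definition~\ref{diophantine condition} gives the needed lower bound) produces $h$ with the stated regularity when $R\ge 3\hat k+3c+2$; the additional requirement $R\ge 9c+4$ (and the $C^{6c+1}$-closeness) is what is needed both to get the action well-defined and uniformly $C^R$ via the bunching lemma and to seed the first step of the iteration. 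Here one uses $\tau=c$ from Theorem~\ref{katzthendio}, so every $\tau$ in the abstract threshold becomes a $c$.

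Finally I would transfer the conjugacy from $C$ back to the central leaf $\mathcal C$: set $h:=(\text{the Moser conjugacy})\circ\Pi:\mathcal C\to\mathbb R^c$, using that $\Pi:\mathcal C\to C$ is a well-defined invertible $C^R$ holonomy map by \cite[Proposition~B.1]{RodriguezHertz2005StableErgodicityCertain}, so that $h\circ T_n\circ h^{-1}$ is the composition of the Moser conjugacy with $\tilde T_n$ and its inverse, which by construction equals $x\mapsto x+Pn$. The main obstacle I expect is not the formal application of Moser but the bookkeeping: verifying that the hypotheses of the quantitative Moser theorem are literally met — in particular that the relevant small-divisor condition for the action is exactly Definition~\ref{diophantine condition} (a condition on the images $Pe_i\cdot q$ of the \emph{generators}, rather than on a single rotation vector), and that the derivative-loss constants coming out of the Nash–Moser iteration are no worse than $3\hat k+3c+2$. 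One must also make sure the closeness hypothesis ``$C^{6c+1}$-close to $F$'' survives passage to the $\tilde T_n$ (composition of holonomies, application of $\Pi^{\pm1}$) with only the claimed finite loss; this is the step where the explicit numerology $6c+1$, $9c+4$ is pinned down.
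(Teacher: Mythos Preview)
Your outline matches the paper's approach: project to $\tilde T_n:C\to C$, invoke the Diophantine property with $\tau=c$ from Theorem~\ref{katzthendio}, apply the Moser/KAM scheme (in the higher-dimensional form worked out in \cite{RodriguezHertz2005StableErgodicityCertain}), and transport back via $\Pi$. The paper carries out exactly the constant-tracking you defer to ``bookkeeping'', following Moser's parameters $\sigma,\ell,m,\xi,\kappa$ to arrive at $\sigma=3c+1$, $\ell=6c+3$, $m=3\hat k+1$, whence the thresholds $R\ge 3\hat k+3c+2$, $R\ge 9c+4$, and the $C^{6c+1}$-closeness.

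One step you omit is more than bookkeeping, however. Moser's scheme (and the version in \cite{RodriguezHertz2005StableErgodicityCertain}) is for maps of a \emph{torus}; the $\tilde T_n$ are maps of $\mathbb R^c$, and nothing a priori forces the $\mathbb Z^c\times\{0\}$ sub-action to act by integer translations. The paper first performs a preliminary conjugation: using bump functions it builds $\tilde h$ with $\tilde h\circ\tilde T_\ell\circ\tilde h^{-1}=R_\ell$ \emph{exactly} for $\ell\in\mathbb Z^c\times\{0\}$, so that the remaining problem descends to $\mathbb T^c$, while $\tilde h\circ\tilde T_n\circ\tilde h^{-1}$ is only $C^{6c+1}$-close to $R_{Pn}$ for general $n$. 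That closeness is obtained via \cite[Corollary~2.4]{RodriguezHertz2005StableErgodicityCertain}, not directly from the $C^{6c+1}$-closeness of $f$ to $F$ as you suggest. Without this reduction, your appeal to ``a version of KAM/Moser on $\mathbb R^c$ (or $\mathbb T^c$)'' does not point to an available theorem.
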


This is essentially already done in
\cite{RodriguezHertz2005StableErgodicityCertain}, we sketch the
construction below and explain the regularities.

Recall than in \S~\ref{sec:trans} we projected $T_n$ to maps
$\tilde{T}_n:C \to C$ of a vector space. We also make the identification
$C=\mathbb{R}^c$ exactly like we did in \S~\ref{linear-thm}.

By means of bump functions we can construct
$\tilde{h}:\mathbb{R}^c\rightarrow \mathbb{R}^c$ so that
$$\tilde{h} \circ \tilde{T}_{\ell}\circ \tilde{h}^{-1}= R_{\ell}$$ for
$\ell\in \mathbb{Z}^c \times \{0\} \subset \mathbb{Z}^d$, where $R_{\alpha}=x\mapsto x+\alpha$, and for a general
$n\in \mathbb{Z}^d$
$$\tilde{h}\circ  \tilde{T}_n\circ  \tilde{h}^{-1}\approx_{C^{6c+1}} R_{Pn}$$
(the proximity to the respective translation for the linear map comes from
 \cite[Corollary 2.4]{RodriguezHertz2005StableErgodicityCertain}).

To improve this $\tilde{h}$ into a simultaneous linearization we
follow Moser \cite{Moser1990CommutingCircleMappings}. Moser shows how
to simultaneously linearize commuting perturbations of rotations of
the circle with a Diophantine condition analogous to ours by using some
KAM type of arguments.

Essentially all the difficulties of
generalizing this to general dimension are already taken care by
F.~Rodriguez-Hertz (the only minor detail is that in the proof of
\cite[Lemma B.3]{RodriguezHertz2005StableErgodicityCertain} it is used
the fact that $F|_C$ is an isometry, but actually the argument works
just as well with the hypothesis of partial hyperbolicity).

In here we will follow the constants in Moser to see how much they can
be optimized. Clearly in our case $\tau=c$. In lemma 3.1 from Moser there is a
convergence that is over $\mathbb{Z}^c$ in our case, making it so that we need
$\sigma>2\tau+c=3c$.

In page 119 we leave, for now, the variables \(\kappa\) and \(\ell\)
free, and set
$N_s=\varepsilon_s^{-\xi}$ with $\xi$ free. The conditions needed to make the rest
of the page work would be 
\begin{align*}
  \xi\sigma +1 > \kappa, \\
  -\xi +2 - \frac{2}{\ell} > \kappa, \\
  \ell\xi -1 >\kappa.
\end{align*}

For the next page we leave the variable $m$ free and we need the
argument to work for $k$ up to $\hat{k}$, so the condition for the argument
to work would be 
\begin{equation*}
  -\xi\sigma +1 - \frac{\hat{r}}{m}-\frac{\hat{r}}{m}\frac{\xi\sigma+\kappa-1}{\kappa-1}>0.
\end{equation*}

For the argument to work we need existence of derivatives up to order $\ell+\sigma$ and
$m+\sigma$ and to control derivatives up order \(\ell\). So we
need to optimize those numbers.

Taking $\kappa$ close enough to $1$ and $\xi$ close enough to $\frac{\kappa-1}{\sigma}$
the conditions are satisfied for 
\begin{align*}
  \ell > 2\sigma,\\
  m > 3\hat{k}.
\end{align*}

Since \(\sigma\), \(\ell\) and $m$ need to be integers we can solve the
problem with $\sigma=3c+1$, \(\ell=6c+3\) and $m=3\hat{k}+1$.

\subsection{Using Fourier to Solve Cohomological Equations for the
  Central Translations}\label{sec:fourier}

So far we show that, in a sense, the central translations are
Diophantine. We now show that Diophantine translations have no
restrictions to solve certain cohomological equations.

\begin{thm}\label{affine}
  Let the linear map $P:\mathbb{Z}^d\to \mathbb{R}^c$, inducing a $\mathbb{Z}^d$-action by
  translations, be Diophantine (see definition \ref{diophantine
    condition}). Further assume that $P\ell=\ell$ for any $\ell\in \mathbb{Z}^c\times \{0\} \subset
  \mathbb{Z}^d$.

  Then any $C^{r+2c+ \text{Hölder}}$ cocycle
  $\Phi:\mathbb{Z}^d\times \mathbb{R}^c\to \mathbb{R}$ for the induced action is
  $C^{r}$-cohomologus to a group morphism
  $\lambda:\mathbb{Z}^d \to \mathbb{R}$, that is there is
  $u \in C^r(\mathbb{R}^c,\mathbb{R})$ so that $$\Phi(n,a)=u(a+Pn)-u(a)+\lambda (n).$$

  Furthermore $\lambda(n)=\int_{[0,1]^c}\Phi(n,a)\,da$.
\end{thm}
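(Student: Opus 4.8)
The plan is to reduce the statement to a countable collection of one–dimensional small–divisor problems via Fourier analysis on $\mathbb{T}^c = \mathbb{R}^c/\mathbb{Z}^c$, exactly in the spirit of the classical proof that Diophantine rotations have no cohomological obstruction. First I would set up the cocycle equation: a cocycle $\Phi:\mathbb{Z}^d\times\mathbb{R}^c\to\mathbb{R}$ over the action $n\cdot a = a+Pn$ satisfies the cocycle identity $\Phi(n+m,a)=\Phi(m,n\cdot a)+\Phi(n,a)$. Because $P\ell=\ell$ for $\ell\in\mathbb{Z}^c\times\{0\}$, the subgroup $\mathbb{Z}^c\times\{0\}$ acts by \emph{integer} translations, hence trivially on $\mathbb{T}^c$; so every $\Phi(n,\cdot)$ descends to a function on $\mathbb{T}^c$ and one may expand in Fourier series $\Phi(n,a)=\sum_{q\in\mathbb{Z}^c}\hat\Phi_q(n)e^{2\pi i q\cdot a}$. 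Writing the target morphism as $\lambda(n)=\hat\Phi_0(n)=\int_{[0,1]^c}\Phi(n,a)\,da$ (the cocycle identity forces $\hat\Phi_0$ to be additive), the equation $\Phi(n,a)=u(a+Pn)-u(a)+\lambda(n)$ becomes, coefficient by coefficient,
\begin{equation*}
  \hat\Phi_q(n) = \hat u_q\bigl(e^{2\pi i q\cdot Pn}-1\bigr),\qquad q\neq 0,
\end{equation*}
so the natural candidate is $\hat u_q = \hat\Phi_q(n)/(e^{2\pi i q\cdot Pn}-1)$ for any $n$ making the denominator nonzero.

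The key steps, in order, are: (i) verify that this definition of $\hat u_q$ is independent of the choice of $n$ — this is a direct consequence of the cocycle identity restricted to the $q$-th Fourier mode, which says $\hat\Phi_q(n+m)=e^{2\pi i q\cdot Pn}\hat\Phi_q(m)+\hat\Phi_q(n)$, so the ratios agree; (ii) obtain a quantitative lower bound on $\max_i|e^{2\pi i q\cdot Pe_i}-1|$ in terms of $|q|$. Here the Diophantine hypothesis of definition~\ref{diophantine condition} gives, for each $q\neq 0$, some $i$ with $\operatorname{dist}(q\cdot Pe_i,\mathbb{Z})>K/|q|^\tau$; since $|e^{2\pi i\theta}-1|\gtrsim \operatorname{dist}(\theta,\mathbb{Z})$, we get $|e^{2\pi i q\cdot Pe_i}-1|>cK/|q|^\tau$. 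Using this particular $n=e_i$ in the formula for $\hat u_q$ yields $|\hat u_q|\le C|q|^\tau|\hat\Phi_q(e_i)|$; (iii) control the decay of $\hat\Phi_q(e_i)$: since $\Phi(e_i,\cdot)\in C^{r+2c+\text{Hölder}}$, its Fourier coefficients decay like $|q|^{-(r+2c+\varepsilon)}$, so $|\hat u_q|\lesssim |q|^{\tau - r - 2c - \varepsilon} = |q|^{-r-\varepsilon}$ using $\tau = c$ (wait — with $\tau=c$ one gets $|q|^{c-r-2c-\varepsilon}=|q|^{-r-c-\varepsilon}$, comfortably summable against polynomials of degree $r$); (iv) conclude that $u(a)=\sum_{q\neq 0}\hat u_q e^{2\pi i q\cdot a}$ defines a $C^r$ function on $\mathbb{T}^c$ by the standard criterion that $\sum |q|^{r}|\hat u_q|<\infty$ (or more carefully, that one can differentiate term by term $r$ times with a convergent series), and that by construction $u$ solves the equation for the generators $e_1,\dots,e_d$ of $\mathbb{Z}^d$, hence for all $n$ by the cocycle identity on both sides.

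I should be slightly careful about why it suffices to bound $\hat\Phi_q(e_i)$ for the finitely many generators rather than all $n$: the point is that for a \emph{fixed} $q$, the index $i=i(q)$ from the Diophantine condition is exactly one of $1,\dots,d$, and $\Phi(e_i,\cdot)$ for $i=1,\dots,d$ are finitely many fixed $C^{r+2c+\text{Hölder}}$ functions, so their Fourier coefficients obey a uniform decay bound $|\hat\Phi_q(e_i)|\le M(1+|q|)^{-(r+2c+\alpha)}$ with $M$ and $\alpha>0$ independent of $q$ and $i$. Once $u$ is constructed on the generators, the identity $\Phi(n,a)-\lambda(n)=u(a+Pn)-u(a)$ propagates to all $n\in\mathbb{Z}^d$ by induction using the cocycle property, and additivity of $\lambda$ is likewise forced by comparing $\Phi(n+m,\cdot)$ with $\Phi(n,\cdot)+\Phi(m,\cdot)$ after integrating over $[0,1]^c$ (the integral of $u(a+Pn)-u(a)$ over the fundamental domain vanishes since $a\mapsto a+Pn$ preserves Lebesgue measure on $\mathbb{T}^c$).

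The main obstacle is the bookkeeping in step (iv): turning the coefficient bound $|\hat u_q|\lesssim |q|^{-r-c-\varepsilon}$ into genuine $C^r$ regularity of $u$ when $r$ is not an integer requires the correct Sobolev/Hölder-type counting, and one must check the exponent budget $r + 2c + \text{Hölder}$ is exactly what makes $\sum_q |q|^{r}|\hat u_q|$ (plus the Hölder tail) converge — this is where the ``$+2c$'' and the extra Hölder exponent are consumed, with one factor of $|q|^c$ paying for the small divisor and essentially another $|q|^c$ (plus the Hölder $\varepsilon$) needed so that $\sum_{q\in\mathbb{Z}^c}|q|^{-c-\varepsilon}<\infty$ after the $|q|^r$ is absorbed. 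Everything else is the routine Fourier-series argument.
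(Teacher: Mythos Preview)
Your Fourier argument from step (i) onward is essentially identical to the paper's, and the small-divisor bookkeeping is correct. The gap is at the very beginning: you assert that ``every $\Phi(n,\cdot)$ descends to a function on $\mathbb{T}^c$'' because the subgroup $\mathbb{Z}^c\times\{0\}$ acts by integer translations. This inference is wrong. The \emph{action} descends to $\mathbb{T}^c$, but the \emph{cocycle} need not. From the cocycle identity with $\ell\in\mathbb{Z}^c\times\{0\}$ you only get
\[
\Phi(n,a+\ell)-\Phi(n,a)=\Phi(\ell,a+Pn)-\Phi(\ell,a),
\]
which vanishes only if $\Phi(\ell,\cdot)$ happens to be constant; nothing in the hypotheses forces this. (Concretely, take any non-periodic $u:\mathbb{R}^c\to\mathbb{R}$ and set $\Phi(n,a)=u(a+Pn)-u(a)$: this is a legitimate cocycle on $\mathbb{R}^c$ that is not $\mathbb{Z}^c$-periodic in $a$.) Without periodicity you cannot write a Fourier expansion, and the entire argument stalls.

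The paper handles this with an explicit preliminary step you are missing: it constructs, using bump functions, an auxiliary $v:\mathbb{R}^c\to\mathbb{R}$ (set $v(a)=-\sum_i b(a_i)\Phi(e_i,a-e_i)$ on $[0,1)^c$ and extend by $v(a+\ell)=v(a)-\Phi(\ell,a)$) so that the modified cocycle $\tilde\Phi(n,a)=\Phi(n,a)+v(a+Pn)-v(a)$ \emph{is} $\mathbb{Z}^c$-periodic in $a$. Only then does it run the Fourier argument on $\tilde\Phi$, obtaining $w$ on $\mathbb{T}^c$, and finally sets $u=v+w$. Your steps (i)--(iv) are exactly this Fourier part, applied to $\tilde\Phi$ rather than $\Phi$; you just need to insert the reduction-to-the-torus step first.
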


We remark that the condition $P\ell=\ell$ can be replaced by $\Phi$ being
defined on the torus $\mathbb{T}^c$.

\begin{proof}
  For this proof we write $\mathbb{Z}^c=\mathbb{Z}^c\times \{0 \}\subset \mathbb{Z}^d$ so that $P\ell=\ell$ for
  $\ell\in \mathbb{Z}^c$. Also we will call ``$K$'' to different constants.
  
  We first project to the torus
  $\mathbb{T}^c=\mathbb{R}^c/\mathbb{Z}^c$ by constructing
  $v:\mathbb{R}^c \to \mathbb{R}$ so that
  $\tilde \Phi (n,a)= \Phi(n,a) + v(a+Pn)-v(a)$ satisfies that
  $\tilde \Phi (n,a+\ell)=\tilde{\Phi}(n,a)$ for $\ell\in \mathbb{Z}^c$.

  Soon we will define $v$ for $a\in[0,1)^c$, after that we will extend
  it for $a+\ell\in\mathbb{R}^c$, $\ell\in \mathbb{Z}^c$, by
  $$v(a+\ell)=v(a) - \Phi ( \ell,
  a)$$ This way we get for
  $\ell\in \mathbb{Z}^c$ \begin{align*}\tilde{\Phi}(n,a+\ell)&=\underbrace{\Phi(n,a+\ell)}_{\Phi(n+\ell,a)-\cancel{\Phi(\ell,a)}}+\underbrace{v(a+\ell+Pn)}_{v(a+Pn) -\Phi(\ell,a+Pn)}\underbrace{-v(a+\ell)}_{-v(a)+\cancel{\Phi(\ell,a)}}\\
                                       &=
                                         \underbrace{\Phi(n+\ell,a)-\Phi(\ell,a+Pn)}_{\Phi(n,a)}
                                         + v(a+Pn)-v(a)\\ &= \tilde{\Phi}(n,a).\end{align*}

                                       We only need to choose a smooth
                                       \(v:[0,1)^c\to \mathbb{R}\) so that its
                                       extension to $\mathbb{R}^c$ is smooth at
                                       the boundaries of $[0,1]^c$.
                                       For this consider
                                       $b:[0,1] \to [0,1]$ a bump
                                       function ($C^\infty$, constantly
                                       $0$ around $0$ and constantly
                                       $1$ around $1$) and just
                                       define
                                       $$v(a)=-\sum b(a_i)\Phi(e_i,a-e_i)$$ where
                                       $e_i\in \mathbb{Z}^c$ are the canonical
                                       elements and $a_i$ is the
                                       $i$-th coordinate of $a$. With
                                       this definition we can check
                                       that \(v\), when some $a_i$
                                       tends to $1$, coincides with the
                                       definition given by the
                                       extension, from there
                                       smoothness is clear.

  We can consider now that $P$ induces an action on the torus $\mathbb{T}^c$
  with cocycle $\tilde{\Phi}$ cohomologus to the original cocycle in $\mathbb{R}^c$. So now we have effectively ``projected'' our problem to the torus,
  where we can use Fourier:
  $$\tilde \Phi(n,a) = \sum_{\ell\in \mathbb{Z}^c} \hat \Phi_{n,\ell} e^{2\pi i \ell \cdot a}.$$
  The cocycle condition
  ($\tilde \Phi (n+m,a)=\tilde \Phi(n,a) + \tilde \Phi (m, a+ Pn)$) now
  becomes
  $$\hat \Phi_{n+m,\ell} = \hat \Phi_{n,\ell} + \hat \Phi_{m,\ell}e^{2\pi i \ell
    \cdot Pn}$$ which implies
  $\hat \Phi_{n,\ell} + \hat \Phi_{m,\ell}e^{2\pi i \ell \cdot Pn}= \hat \Phi_{m,\ell} + \hat
  \Phi_{n,\ell}e^{2\pi i \ell \cdot Pm}$ because we can commute the roles on $n$ and
  $m$. From there
  \begin{equation} \label{coefficents}\frac{\hat \Phi_{n,\ell}}{e^{2\pi i \ell \cdot
        Pn}-1}=\frac{\hat \Phi_{m,\ell}}{e^{2\pi i \ell \cdot Pm}-1}.
  \end{equation}

  Now we want to solve the original problem in this context: that is
  finding $w$ and $\lambda$ so that
  $$\tilde \Phi(n,a)=w(a+Pn)-w(a)+\lambda(n).$$ Of course
  $\lambda(n) = \int \tilde \Phi (n,a)\,da$ (the cocycle condition for
  $\tilde \Phi$ translates into the group morphism condition for
  $\lambda$). Because translations preserve volume we have $\int
  \tilde \Phi (n,a)\,da = \int_{[0,1]^c}  \Phi (n,a)\,da$.

  As for the rest of the Fourier coefficients, they should
  satisfy
  $$\hat w_\ell = \frac{\hat \Phi_{m,\ell}}{e^{2\pi i \ell \cdot Pm}-1}$$ which is well
  defined in light of (\ref{coefficents}). Furthermore, picking the
  correct $i$ for the Diophantine condition~\ref{diophantine
    condition}, so that $\lvert e^{2\pi i\ell \cdot Pm-1} \rvert \ge Kd(\ell\cdot Pm, \mathbb{Z})
  > K \lvert \ell \rvert^{-c}$, and using $\lvert \hat{\Phi}_{e_i,\ell} \rvert\leq K \lVert \hat{\Phi}_{e_i} \rVert \lvert \ell
  \rvert^{-r-2c-\varepsilon}$ we have
  $$\left| \hat{w}_{\ell} \right|\le K \left| \hat{\Phi}_{e_i,\ell} \right|\left|
    \ell \right|^{c} \leq K \lVert \hat{\Phi}_{e_i} \rVert \lvert \ell
  \rvert^{-r-2c-\varepsilon}\lvert \ell \rvert^c\le K \lvert \ell \rvert^{-r-c-\varepsilon}$$
  so the Fourier series of $\hat{w}$ converges to a $C^{r+\text{Hölder}}$ function.

 We now think of this $w$ as defined in
  $\mathbb{R}^c$ and set $u=v+w$.
\end{proof}

\subsection{Minimality of the Central Translations}

Here is a small lemma that we will need later:

\begin{lem}\label{dense}

  Any orbit for the central translations $T_n: \mathcal{C} \to \mathcal{C}$ (as
  $n$ varies along $\mathbb{Z}^d$) is dense.

\end{lem}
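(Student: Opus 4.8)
The plan is to reduce the statement to a corresponding fact about the linearized action $\tilde T_n$ on $C \cong \mathbb{R}^c$, and then to a purely arithmetic statement about the image of the lattice under $P$. First I would recall that in the linear case $T_n a = a + Pn$, and in the non-linear case Theorem~\ref{lin} (or already the conjugacy $\Pi$ from \S~\ref{sec:trans}, which intertwines $T_n$ with $\tilde T_n$) lets us transport the question: an orbit $\{T_n x\}$ is dense in $\mathcal{C}$ if and only if $\{\tilde T_n \Pi x\}$ is dense in $C$, since $\Pi$ is a homeomorphism. So it suffices to show that every orbit of the $\mathbb{Z}^d$-action $a \mapsto a + Pn$ on $\mathbb{R}^c$ — wait, this action is by translations on a \emph{non-compact} space, so orbits are certainly not dense in $\mathbb{R}^c$; the correct statement is that $\mathcal{C}$, being a central leaf of the torus, is better thought of via the torus structure. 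The cleanest route: the central translations descend to (or rather, are modeled on) the action on $\mathbb{T}^c = \mathbb{R}^c/\mathbb{Z}^c$, because $P\ell = \ell$ for $\ell \in \mathbb{Z}^c \times \{0\}$, so the subgroup $\mathbb{Z}^c \times \{0\}$ acts trivially modulo $\mathbb{Z}^c$ and the action factors through the quotient $\mathbb{Z}^d/(\mathbb{Z}^c\times\{0\}) \cong \mathbb{Z}^{d-c}$ acting on $\mathbb{T}^c$ by the translations $a \mapsto a + Pn$.

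Thus the heart of the matter is: the closure of the subgroup $\{Pn \bmod \mathbb{Z}^c : n \in \mathbb{Z}^d\}$ in $\mathbb{T}^c$ is all of $\mathbb{T}^c$. Since this closure is a closed subgroup $H$ of $\mathbb{T}^c$, if $H \neq \mathbb{T}^c$ then there is a nontrivial character, i.e. a vector $q \in \mathbb{Z}^c \setminus \{0\}$ with $q \cdot Pn \in \mathbb{Z}$ for all $n \in \mathbb{Z}^d$; in particular $q \cdot Pe_i \in \mathbb{Z}$ for every $i = 1,\dots,d$. But this directly contradicts the Diophantine property of $P$ established in Theorem~\ref{katzthendio} (Definition~\ref{diophantine condition}): for that $q$ there is some $i$ and, for every $p \in \mathbb{Z}$, $|Pe_i \cdot q - p| > K|q|^{-c} > 0$, so $Pe_i\cdot q \notin \mathbb{Z}$. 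Hence $H = \mathbb{T}^c$, every orbit of the translation action on $\mathbb{T}^c$ is dense, and pulling back through $\Pi$ (which is a homeomorphism onto $C$, itself identified with the torus leaf picture) gives density of every $T_n$-orbit in $\mathcal{C}$.

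The only genuinely delicate point is bookkeeping about \emph{which} space $\mathcal{C}$ is and how it carries a torus structure: $\mathcal{C} = \mathcal{W}^c(0)$ is a single center leaf, which in the universal cover is a copy of $\mathbb{R}^c$, and the relevant compactification identifying it with $\mathbb{T}^c$ is exactly the one making $\Pi \colon \mathcal{C} \to C$ equivariant with the model $a \mapsto a+Pn$ where the $\mathbb{Z}^c\times\{0\}$-part acts as the deck group. I would spell this out by noting $T_\ell$ for $\ell \in \mathbb{Z}^c\times\{0\}$ corresponds to a deck transformation of $\mathcal{C}$ (it is the holonomy to $\mathcal{C} \cap \mathcal{W}^{su}(x+\ell)$, and $x+\ell$ and $x$ project to the same point of $\mathbb{T}^d$), so modulo this lattice the action is the finitely-generated translation action analyzed above; everything else is the soft structure-of-closed-subgroups-of-tori argument plus Theorem~\ref{katzthendio}. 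I expect no substantial obstacle beyond making this identification precise.
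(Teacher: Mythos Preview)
Your proposal is correct and matches the paper's proof: reduce via Theorem~\ref{lin} to the translation action $a\mapsto a+Pn$ on $\mathbb{R}^c$, pass to $\mathbb{T}^c$ using $P\ell=\ell$ for $\ell\in\mathbb{Z}^c\times\{0\}$, and argue that a proper closed subgroup of $\mathbb{T}^c$ is annihilated by some nonzero $q\in\mathbb{Z}^c$, contradicting the Diophantine condition of Theorem~\ref{katzthendio}. The paper phrases that last step through the structure decomposition $\overline{\{Pn\}}=V+Q$ rather than via characters, but this is the same argument.

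One small correction: your parenthetical suggesting that the conjugacy $\Pi$ alone already suffices is not quite right. The map $\Pi$ only intertwines $T_n$ with $\tilde T_n$, and in the perturbed (non-linear) case $\tilde T_n$ is \emph{not} the translation $a\mapsto a+Pn$; it is merely close to it. You genuinely need the full Moser linearization of Theorem~\ref{lin} to know the conjugated action is the Diophantine translation action, which is exactly what the paper invokes.
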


\begin{proof}

  In virtue of lemma \ref{lin} we only need to prove this in the
  linear case where $T_n:\mathbb{R}^c\to \mathbb{R}^c$ is of the form $T_na=a+Pn$, where
  $P$ satisfies the Diophantine
  condition of definition~\ref{diophantine condition} and $P\ell=\ell$ for $\ell\in \mathbb{Z}^c\times \{0\}\subset \mathbb{Z}^d$.

  Because of the $P\ell=\ell$ condition we can think of $T_n$ as acting on
  the torus $\mathbb{T}^c$ and we just need to show that the set $\{Pn\}_{n\in
    \mathbb{Z}^d} \subset \mathbb{T}^c$ is dense.

  Because of classification of closed Lie groups is easy to see that,
  for some sub-torus $V\subset \mathbb{T}^c$ and finite subgroup $Q\subset \mathbb{T}^c$,
\begin{equation*}
  \overline{\{Pn\}}= V + Q.
\end{equation*}

If $V \subsetneq \mathbb{T}^d$ then we can get an integer vector $p\in \mathbb{Z}^c$ perpendicular
to $V$ and (by multiplying for example by the least common multiple of
the denominators of all the coordinates of all elements of $Q$) so
that $p \cdot q \in \mathbb{Z}$ for every $q\in Q$.

So now for every $n\in \mathbb{Z}^d$ we have $Pn=v+q$ for some $v\in V$ and $q \in Q$, but
then $Pn \cdot p = q\cdot p \in \mathbb{Z}$, but this contradict the Diophantine
condition and so $V=\mathbb{T}^d$ and the orbit of $0$ (and hence any orbit) is dense.
\end{proof}

\section{Proof of Theorem~\ref{main}}

\subsection{Solving the Cohomological Equation Along a Central Leaf}\label{sec:solved-central}

\subsubsection{Constructing the Solution}
With our work so far we can solve cohomological equations for the
central translations $T_n$. This begs to question: how is this
relevant to the original dynamic $f$?

The idea now is to capture the information of the holonomy functional
 into a cocycle for the central
translations:
 $$\hol(n,x)=\hol_{x+n}^{T_nx}\varphi.$$ Indeed this is
a cocycle for $T_n$ (in what follows we use that the $su$-foliation and
$\varphi$ are $\mathbb{Z}^d$-invariant, and \eqref{additivity}):
\begin{align*} \hol(n+m,x)&= \hol_{x+n+m}^{T_{n+m}x} \varphi\\ & =
  \left(\hol_{x+n+m}^{T_mx+n} +\hol_{T_mx+n}^{T_{n+m}x}\right)\varphi\\ &
                                                                       = \left(\hol_{x+m}^{T_mx} +\hol_{(T_mx)+n}^{T_n(T_mx)}\right)\varphi \\
                                 &= \hol(m,x) +
                                   \hol(n,T_mx).
\end{align*}

Using theorem~\ref{lin} we have $h$ such that
$h(T_nx)=h(x)+Pn$, with $P$ Diophantine in the sense of definition~\ref{diophantine condition}. So
$$ \widehat{\hol} (n,a)=\hol \left(n, h^{-1}(a)\right)$$ is a cocycle for the action of
$P$ by translations. So we can apply theorem~\ref{affine} getting: $$ \widehat{\hol}(n,a)=\hat{u}(a+Pn)-\hat{u}(a)+\lambda(n)$$ which in turn gives us an
analogous result for the $T_n$-cocycle:
\begin{equation}\label{eq:solved-hol}
\hol(n,x)=u(T_nx)-u(x)+\lambda(n) 
\end{equation}
where $u=\hat{u}\circ h$.

As far as regularities go, this $u$ comes from theorem~\ref{affine},
so it would be $C^{k+\text{Hölder}}$ if
$\widehat{\hol}$ and $h$ are $C^{k+2c+\text{Hölder}}$. From lemma~\ref{holreg} we
need $F$ to be $(k+2c+\varepsilon)$-strongly bunched for
$\hol_{x+n}^{T_nx}\varphi=\hol_{x+n}^{\pi^c(x+n)}\varphi$ to be this regular (on top
of \(\varphi\) itself being this regular).

On the other hand $h$ comes from theorem~\ref{lin}, so we need $f$ to be
$C^{6c+1}$-close to $F$ and $F$ to be both $(3k+9c+3)$ and
$(9c+4)$-bunched.

\subsubsection{Properties of The Solution}

Several of the obvious computations needed in the rest of this notes would
have a ``troublesome'' $\lambda$, so we get it out of the way already by
proving that it is $0$.

\begin{lem}

  In equation \eqref{eq:solved-hol} we have $\lambda=0$.

\end{lem}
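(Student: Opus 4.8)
The plan is to show $\lambda$ is a group morphism $\mathbb{Z}^d\to\mathbb{R}$ that vanishes, and the key is to exploit the fact that, by the ``Fundamental theorem of dynamics'' \eqref{fundamental}, the holonomy functional interacts well with the dynamics $f$, together with the minimality of the central translations (Lemma~\ref{dense}). First I would recall from Theorem~\ref{affine} that $\lambda(n)=\int_{[0,1]^c}\widehat{\hol}(n,a)\,da$, so $\lambda$ is a group homomorphism; in particular it suffices to show $\lambda(n)=0$ for a generating set, or more robustly, to find a single value of $n$ (or a dense set of orbits) forcing $\lambda(n)=0$.

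\textbf{Main idea.} The natural move is to apply $f$ to equation~\eqref{eq:solved-hol}. Since $f$ commutes appropriately with the $su$-holonomy and the central translations $T_n$ (recall $T_n x=\pi^c(x+n)$ and $f$ preserves $\mathcal{W}^{su}$, $\mathcal{W}^c$, and commutes with the deck action $x\mapsto x+n$), one computes how $\hol(n,fx)$ relates to $\hol(n,x)$. Concretely, using \eqref{fundamental} one gets something like
\begin{equation*}
  \hol(n,fx)-\hol(n,x) = \varphi(T_n x)-\varphi(x+n) + (\text{telescoping }\varphi\text{ terms}),
\end{equation*}
but the crucial point is that the $\varphi$-contributions, when integrated against the (unique, by minimality and unique ergodicity of the central translations on $\mathcal{C}$) invariant measure, or when one passes to a clever average, must cancel. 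Substituting \eqref{eq:solved-hol} into both sides, $u(T_n fx)-u(fx)-u(T_n x)+u(x)$ must equal the $\varphi$-terms, and since the left side is a coboundary-type expression while $\lambda(n)$ has dropped out entirely, one reads off a functional equation for $u\circ f-u$ along the central leaf. Iterating $f$ and using that the central translation orbits are dense (Lemma~\ref{dense}) together with continuity of $u$, one concludes $\lambda(n)$ must be $0$: if it were nonzero, the $\lambda(n)$-term, which is the ``drift'' of the cocycle $\hol(n,\cdot)$, would have to be matched by an unbounded contribution, contradicting that $\hol_{x+n}^{\pi^c(x+n)}\varphi$ is a bounded (indeed uniformly $C^R$) function by Lemma~\ref{holreg}.

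\textbf{Cleaner route.} Alternatively — and this is probably what the paper does — one observes directly that $\hol(n,x)=\hol_{x+n}^{T_n x}\varphi$ is a \emph{bounded} function of $x$ (Lemma~\ref{holreg} gives it uniformly $C^R$, hence bounded), so the cocycle $n\mapsto\hol(n,\cdot)$ takes values in bounded functions. From \eqref{eq:solved-hol}, $\lambda(n)=\hol(n,x)-(u(T_n x)-u(x))$, and averaging over a long orbit segment $x, T_{n}x, T_{2n}x,\dots$ (i.e. Birkhoff-averaging the cocycle identity $\hol(kn,x)=\sum_{j=0}^{k-1}\hol(n,T_{jn}x)=u(T_{kn}x)-u(x)+k\lambda(n)$), boundedness of $u$ and of $\hol$ forces $\lambda(n)=\lim_k \frac{1}{k}(u(T_{kn}x)-u(x)) = 0$. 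I expect the main obstacle to be purely bookkeeping: making sure the $\mathbb{Z}^d$-invariance of $\varphi$ and $\mathcal{W}^{su}$ is used correctly so that $\hol(n,x)$ genuinely descends to a function on the \emph{compact} central leaf $\mathcal{C}\subset\mathbb{T}^d$ (so that boundedness is legitimate), and checking that $u=\hat u\circ h$ inherits boundedness from $\hat u$ being defined on (a quotient of) the compact leaf; once that is pinned down, the vanishing of $\lambda$ is immediate from the Birkhoff average above.
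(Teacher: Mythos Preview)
Your ``cleaner route'' has a genuine gap, and it is not merely bookkeeping. You want to conclude $\lambda(n)=0$ from
\[
k\lambda(n)=\hol(kn,x)-\bigl(u(T_{kn}x)-u(x)\bigr),
\]
using that $u$ is bounded and that $\hol(kn,x)$ is controlled. But neither boundedness claim is available a priori. First, $\mathcal{C}$ is \emph{not} compact: it is a single leaf in the universal cover $\mathbb{R}^d$, diffeomorphic to $\mathbb{R}^c$, and its projection to $\mathbb{T}^d$ is generically a dense immersed (non-compact) submanifold. So there is no reason for $u=\hat u\circ h:\mathcal{C}\to\mathbb{R}$ to be bounded. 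In fact the implication goes the wrong way: from \eqref{eq:solved-hol} one sees $u(T_{kn}x)-u(x)=\hol(kn,x)-k\lambda(n)$, so if $\lambda(n)\neq 0$ and $\hol(kn,x)=o(k)$ then $u$ is necessarily unbounded. Your assumption that $u$ is bounded is therefore equivalent to what you are trying to prove. Second, Lemma~\ref{holreg} gives you that $x\mapsto\hol(n,x)$ is bounded for each \emph{fixed} $n$, but the telescoped sum $\hol(kn,x)=\sum_{j=0}^{k-1}\hol(n,T_{jn}x)$ then only yields an $O(k)$ bound, and its Birkhoff average is precisely $\int\widehat{\hol}(n,\cdot)=\lambda(n)$, not zero --- so the averaging is tautological.

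What the paper actually does is bypass $u$ entirely and prove the missing estimate directly: it shows $\lvert\hol_{x+n}^{T_nx}\varphi\rvert=O(\log\lvert n\rvert)$ uniformly in $x$, by writing the holonomy as a stable sum plus an unstable sum, noting that $d(x+n,T_nx)$ grows at most linearly in $\lvert n\rvert$, and then using exponential contraction to see that after a time $i(n)\asymp\log\lvert n\rvert$ the tail of the sum is uniformly bounded while the first $i(n)$ terms contribute at most $2i(n)\max\lvert\varphi\rvert$. Since $\lambda(n)=\int_{[0,1]^c}\widehat{\hol}(n,a)\,da$, this gives $\lvert\lambda(n)\rvert=O(\log\lvert n\rvert)=o(\lvert n\rvert)$, and a group homomorphism $\mathbb{Z}^d\to\mathbb{R}$ with sublinear growth must vanish. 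The sublinear-in-$n$ estimate is the real content; once you have it, your averaging argument is unnecessary.
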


\begin{proof}

  We know that $\lambda(n)=\int
  \widehat{\hol}_{[0,1]^c}(n,a)\,da$  from lemma~\ref{affine}. So our strategy is to show that
  $\hol_{x+n}^{T_nx}$ grows sub-linearly on $n$ (uniformly so in $x$),
  implying that in order for $\lambda$ to be linear it has no other choice
  than to be $0$.

  We call $S_nx=\mathcal{W}^s(x+n)\cap \mathcal{W}^u(T_nx)$, that is the
  point so that
  \begin{align*}\hol_{x+n}^{T_nx}\varphi= &\left( \sum\limits_0^{\infty}\varphi(f^i(x+n))-\varphi(f^iS_nx)
  \right)\\&- \left( \sum\limits_1^{\infty}\varphi(f^{-i}(S_nx))-\varphi(f^{-i}T_nx) \right).\end{align*}

  We will prove that
  $ \sum_0^{\infty}\varphi(f^i(x+n))-\varphi(f^iS_nx)$ is sub-linear in
  $n$ (the other half is analogous). It is clear, since all leaves are
  close to the linear leaves, that the distance between $x+n$ and
  $S_nx$ is linearly bounded in $n$, and since the stable manifold
  contracts exponentially there is $i(n)$ (logarithmically in $n$) so
  that $f^{i(n)+j}(x+n)$ and $f^{i(n)+j}S_nx$ are at distance at most $\mu^j$
  for $\mu<1$ the contraction rate of the partial hyperbolicity. 
  So $$ \sum\limits_{i(n)}^{\infty}\varphi(f^i(x+n))-\varphi(f^iS_nx)<K$$ and
  $$\hol_{x+n}^{T_nx}\varphi<2i(n)\max\varphi +K.$$
\end{proof}

Now we can solve the cohomological equation along the central leaf:
\begin{lem}\label{centralsol}

  The $u$ of equation \eqref{eq:solved-hol}
  satisfies $$\varphi(x)=u(fx)-u(x)+ \mathrm{const.}$$ for any $x\in \mathcal{C}$.

\end{lem}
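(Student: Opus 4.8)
The plan is to unwind equation~\eqref{eq:solved-hol} together with the fundamental theorem of dynamics \eqref{fundamental} and the fact, just proved, that $\lambda = 0$. Recall that by construction $\hol(n,x) = \hol_{x+n}^{T_n x}\varphi$ and, since $\lambda = 0$, we have $\hol(n,x) = u(T_n x) - u(x)$ for all $n \in \mathbb{Z}^d$ and all $x \in \mathcal{C}$. The key observation is that $T_n x = \pi^c(x+n)$, so $\hol_{x+n}^{T_n x}\varphi = \hol_{x+n}^{\pi^c(x+n)}\varphi$, and this is precisely the ``holonomy transport to the central leaf'' of the point $x+n$. So equation~\eqref{eq:solved-hol} tells us that the function $x \mapsto u(\pi^c x)$, evaluated at the integer translates $x+n$, differs from $u(x)$ by exactly this holonomy term.

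First I would fix $x \in \mathcal{C}$ and compare the two sides of the desired identity $\varphi(x) = u(fx) - u(x)$. I would apply \eqref{fundamental} in the form $\left(\hol_{fx'}^{fy'} - \hol_{x'}^{y'}\right)\varphi = \varphi(y') - \varphi(x')$ with a well-chosen $su$-related pair $x', y'$. The natural choice is to take $x' = x + n$ and $y' = T_n x$ for some $n$: then $fy' = f(T_n x)$ and $fx' = f(x+n) = fx + F_* n$ in the universal cover (here $F_*n \in \mathbb{Z}^d$ since $f$ is homotopic to $F$), and one checks that $f(T_n x) = T_{F_* n}(fx)$ using that $f$ commutes with the $su$-foliation and with integer translations, and that $\mathcal{C}$ is $f$-invariant (since $0$ is fixed). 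Plugging in, \eqref{fundamental} yields
\begin{equation*}
\hol(F_* n, fx) - \hol(n, x) = \varphi(T_n x) - \varphi(x + n) = \varphi(T_n x) - \varphi(x),
\end{equation*}
using $\mathbb{Z}^d$-periodicity of $\varphi$ in the last step. Now substitute $\hol(n,x) = u(T_n x) - u(x)$ and $\hol(F_*n, fx) = u(T_{F_*n}(fx)) - u(fx) = u(f(T_n x)) - u(fx)$, giving
\begin{equation*}
\bigl(u(f(T_n x)) - u(fx)\bigr) - \bigl(u(T_n x) - u(x)\bigr) = \varphi(T_n x) - \varphi(x),
\end{equation*}
i.e. $\bigl(u(f(T_n x)) - u(T_n x) - \varphi(T_n x)\bigr) = \bigl(u(fx) - u(x) - \varphi(x)\bigr)$. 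Thus the function $G(y) := u(fy) - u(y) - \varphi(y)$ is constant along every orbit $\{T_n x\}_{n \in \mathbb{Z}^d}$ of the central translations.

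By Lemma~\ref{dense} every such orbit is dense in $\mathcal{C}$, and $G$ is continuous (all of $u$, $f$, $\varphi$ are at least continuous on $\mathcal{C}$), so $G$ is globally constant on $\mathcal{C}$. That is exactly the assertion $\varphi(x) = u(fx) - u(x) + \mathrm{const.}$ for $x \in \mathcal{C}$. The main point requiring care is the bookkeeping in the universal cover: verifying that $f$ maps $\mathcal{C}$ to itself (so $u \circ f$ makes sense on $\mathcal{C}$), that the integer translate $x + n$ pushes forward under $f$ to $fx + F_* n$ with $F_* n \in \mathbb{Z}^d$, and the intertwining $f \circ T_n = T_{F_* n} \circ f$; once these are in place the algebra collapses as above, with $\lambda = 0$ being essential to even write $\hol(n,x) = u(T_n x) - u(x)$ without an extra additive term that would obstruct the cancellation.
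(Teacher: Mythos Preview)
Your proof is correct and is essentially identical to the paper's own argument: both use the intertwining $f\circ T_n = T_{Fn}\circ f$ (your $F_*n$ is the paper's $Fn$), apply \eqref{fundamental} to the pair $(x+n,\,T_nx)$, substitute \eqref{eq:solved-hol} with $\lambda=0$, and conclude that $u\circ f - u - \varphi$ is invariant under the $T_n$-action, hence constant by Lemma~\ref{dense} and continuity. The only cosmetic difference is that the paper writes the chain starting from $u_f(T_nx)-u_f(x)$ rather than from \eqref{fundamental}, but the algebra is the same.
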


\begin{proof}We will call $u_f=u\circ f - u$ (so the objective is to prove
  that $u_f-\varphi$ is constant).

Before the computations notice that $f(x+n)=fx +Fn$, where $F$ is the
nearby linear map, and
$fT_nx=T_{Fn}fx$ (since both lie in $\mathcal{W}^{su}(fx+Fn)\cap \mathcal{C}$). We will
further use equation \eqref{fundamental} in one of the steps.
\begin{align*}
  u_f(T_nx)-u_f(x) & = u(fT_nx)-u(fx)-(u(T_nx)-u(x))  \\
                   &= u(T_{Fn}fx)-u(fx) - \hol_{x+n}^{T_nx}\varphi   \\
                   &= \hol_{fx+Fn}^{T_{Fn}fx}\varphi  - \hol_{x+n}^{T_nx}\varphi 
  \\&= \left(\hol_{f(x+n)}^{fT_nx}-\hol_{x+n}^{T_nx}\right)\varphi  \\
                   &= \varphi(T_n x) - \varphi(x+n)   \\
                   &= \varphi(T_n x) - \varphi(x). 
\end{align*}  

So we have concluded $$u_{f}(x)-\varphi(x)=u_f(T_nx)-\varphi(T_nx),$$ so from
continuity and minimality of the action of $T_n$ (lemma \ref{dense}) we conclude that
indeed $u_f-\varphi$ is constant. 
\end{proof}

\subsection{Extending the Solution}\label{extend}

So after \S~\ref{sec:solved-central} we have $u: \mathcal{C} \to \mathbb{R}$ that satisfies the
cohomological equation \eqref{eq:cohomological}. We just need to
extend it, first to $\mathbb{R}^d$ because of our abuse of notation and then
show that it projects to $\mathbb{T}^d$ (and that it is indeed a solution).
Such extension will be
\begin{equation*}
  u(x)=u(\pi^cx) - \hol_{x}^{\pi^{c}x}\varphi
\end{equation*} where the right hand side $u$ is well defined since
$\pi^cx\in \mathcal{C}$.

\begin{lem}

  This $u$ satisfies the cohomological equation \eqref{eq:cohomological}.

\end{lem}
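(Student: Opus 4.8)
The plan is to verify the cohomological equation $\varphi(x) = u(fx) - u(x) + \mathrm{const.}$ for the extended function $u(x) = u(\pi^c x) - \hol_x^{\pi^c x}\varphi$ by reducing the computation along any $x \in \mathbb{R}^d$ to what we already know along the central leaf $\mathcal{C}$ (Lemma~\ref{centralsol}), using the intertwining relations between $\pi^c$, $f$, and the holonomy functionals. First I would record the two compatibility facts: (i) $\pi^c(fx) = f(\pi^c x)$, since $f$ preserves $\mathcal{W}^{su}$ and $\mathcal{C} = \mathcal{W}^c(0)$ is $f$-invariant, so $f(\pi^c x) \in \mathcal{W}^{su}(fx) \cap \mathcal{C}$; and (ii) the fundamental identity \eqref{fundamental}, which gives $\left(\hol_{fx}^{f\pi^c x} - \hol_x^{\pi^c x}\right)\varphi = \varphi(\pi^c x) - \varphi(x)$ because $\pi^c x \in \mathcal{W}^{su}(x)$.

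Then I would simply expand:
\begin{align*}
  u(fx) - u(x) &= \Bigl( u(\pi^c fx) - \hol_{fx}^{\pi^c fx}\varphi \Bigr) - \Bigl( u(\pi^c x) - \hol_x^{\pi^c x}\varphi \Bigr) \\
  &= \Bigl( u(f\pi^c x) - u(\pi^c x) \Bigr) - \left( \hol_{fx}^{f\pi^c x} - \hol_x^{\pi^c x} \right)\varphi \\
  &= \Bigl( \varphi(\pi^c x) + \mathrm{const.} \Bigr) - \Bigl( \varphi(\pi^c x) - \varphi(x) \Bigr) \\
  &= \varphi(x) + \mathrm{const.},
\end{align*}
where the third equality uses Lemma~\ref{centralsol} applied at the point $\pi^c x \in \mathcal{C}$ for the first bracket, and fact (ii) for the second bracket. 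This establishes the cohomological equation on $\mathbb{R}^d$ with the same constant as on $\mathcal{C}$.

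The one genuine subtlety — and the step I expect to be the main obstacle to write cleanly — is fact (i), the commutation $\pi^c \circ f = f \circ \pi^c$ as maps on the universal cover, together with making sure the abuse of notation between $\mathbb{R}^d$ and $\mathbb{T}^d$ is handled consistently (this is exactly the content of Remark~\ref{suregular} that stable- and unstable-holonomies, hence $\pi^c$, commute with the dynamics because everything in sight is $f$- and $\mathbb{Z}^d$-equivariant). Everything else is a formal manipulation of the additive cocycle properties of $\hol$ collected in the earlier lemmas. I would note that the regularity of this extended $u$ is inherited from the regularity of $u|_\mathcal{C}$ (which is $C^r$ by the choice of bunching in \S~\ref{sec:solved-central}), of $\pi^c$ along center leaves, and of $x \mapsto \hol_x^{\pi^c x}\varphi$ (Lemma~\ref{holreg}, second part, using strong $R$-bunching), so that Journé's lemma again upgrades $C^r$-along-each-foliation to genuine $C^r$ regularity on $\mathbb{T}^d$; and that $u$ descends to the torus because both $u(\pi^c x)$ and $\hol_x^{\pi^c x}\varphi$ depend only on $x \bmod \mathbb{Z}^d$ (by $\mathbb{Z}^d$-invariance of $\mathcal{W}^{su}$, $\varphi$, and the choice of $\mathcal{C}$), which I would state as the concluding remark.
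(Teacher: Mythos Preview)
Your core argument is correct and essentially identical to the paper's: note $\pi^c\circ f = f\circ\pi^c$, expand $u(fx)-u(x)$, apply Lemma~\ref{centralsol} to the first bracket and \eqref{fundamental} to the second, and watch $\varphi(\pi^cx)$ cancel. Nothing to add there.

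However, your concluding remark about $\mathbb{Z}^d$-periodicity is wrong, and since the paper treats this as a separate lemma it is worth flagging. You claim that ``both $u(\pi^c x)$ and $\hol_x^{\pi^c x}\varphi$ depend only on $x \bmod \mathbb{Z}^d$'', but neither does individually. Indeed $\pi^c(x+n)=T_n(\pi^cx)$, so $u(\pi^c(x+n))=u(T_n\pi^cx)$, and by \eqref{eq:solved-hol} (with $\lambda=0$) this differs from $u(\pi^cx)$ by $\hol_{\pi^cx+n}^{T_n\pi^cx}\varphi$, which is generically nonzero. Likewise $\hol_{x+n}^{\pi^c(x+n)}\varphi - \hol_{x}^{\pi^cx}\varphi = \hol_{\pi^cx+n}^{T_n\pi^cx}\varphi$ by additivity and $\mathbb{Z}^d$-invariance of $\varphi$. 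The two defects cancel, but this cancellation is precisely the content of the next lemma in the paper and uses the defining property of $u$ on $\mathcal{C}$, not mere $\mathbb{Z}^d$-invariance of the ingredients. Your remark on regularity via Journ\'e is fine and matches the paper's subsequent discussion.
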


\begin{proof} First notice that $\pi^cfx=f\pi^cx$ since the foliations are
  $f$-invariant. In what follows we will use lemma \ref{centralsol}
  and the ``fundamental'' theorem \eqref{fundamental}.
  \begin{align*}
  u(fx)-u(x)&=u(f\pi^cx)-u(\pi^cx)-\left(\hol_{fx}^{f\pi^cx}-\hol_{x}^{\pi^cx}\right)\varphi\\
            &=\varphi(\pi^cx)-K-(\varphi(\pi^cx)-\varphi(x))\\ &=\varphi(x)-K.
\end{align*}
\end{proof}

\begin{lem}

  This $u$ is well defined on the torus $\mathbb{T}^d$, that is $u(x+n)=u(x)$
  for every $n\in \mathbb{Z}^d$.

\end{lem}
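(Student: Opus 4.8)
The plan is to show $u(x+n) = u(x)$ directly from the definitions, using the two ingredients that make $u$ well defined away from this identity: the behaviour of $u$ on $\mathcal{C}$ established via \eqref{eq:solved-hol}, and the cocycle relation $\hol(n,x) = \hol_{x+n}^{T_nx}\varphi = u(T_nx) - u(x)$ (recall $\lambda = 0$). First I would write, using the definition of the extension and then the definition of $\pi^c$,
\begin{equation*}
  u(x+n) = u(\pi^c(x+n)) - \hol_{x+n}^{\pi^c(x+n)}\varphi = u(T_nx) - \hol_{x+n}^{T_nx}\varphi,
\end{equation*}
since $\pi^c(x+n) = \mathcal{C} \cap \mathcal{W}^{su}(x+n) = T_nx$ by the very definition of the central translations. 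Then the key point is that $\hol_{x+n}^{T_nx}\varphi$ is exactly the cocycle $\hol(n,x)$, which by \eqref{eq:solved-hol} equals $u(T_nx) - u(x)$ (here using $\lambda = 0$, proved in the previous lemma).

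Substituting, I get
\begin{equation*}
  u(x+n) = u(T_nx) - \bigl(u(T_nx) - u(x)\bigr) = u(x),
\end{equation*}
which is exactly the claim. So the proof is really just a matter of carefully matching up the definition of the extension with the central-leaf solution through the identity $\pi^c(x+n) = T_nx$.

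I expect there to be essentially no serious obstacle here: the content has all been pushed into the earlier lemmas (the vanishing of $\lambda$, the solution \eqref{eq:solved-hol} of the cocycle equation for $\hol(n,\cdot)$, and the regularity statements). The only thing to be careful about is bookkeeping with the abuse of notation between $\mathbb{T}^d$ and $\mathbb{R}^d$ — one must check that "$u$ on $\mathcal{C}$" is genuinely a function on the leaf $\mathcal{C}$ (invariant under the relevant identifications) so that $u(T_nx)$ and $u(\pi^c(x+n))$ really denote the same value, which is what makes the cancellation legitimate. Once that is in place the three-line computation above closes the argument, and with it the proof of Theorem~\ref{main}.
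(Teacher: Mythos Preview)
Your computation is clean but it only proves the lemma for $x\in\mathcal{C}$. The identity you invoke, $\pi^c(x+n)=T_nx$, is literally the definition of $T_n$ \emph{only when $x\in\mathcal{C}$}; for a general $x\in\mathbb{R}^d$ the central translation $T_nx$ is not even defined, and the cocycle relation $\hol(n,x)=u(T_nx)-u(x)$ from \eqref{eq:solved-hol} likewise only applies to $x\in\mathcal{C}$. Since the extension $u(x)=u(\pi^cx)-\hol_x^{\pi^cx}\varphi$ is meant to live on all of $\mathbb{R}^d$, the periodicity must be checked for arbitrary $x$, and your argument as written does not do that.

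The correct identity (and the one the paper uses) is $\pi^c(x+n)=T_n\pi^cx$: both points lie in $\mathcal{W}^{su}(x+n)\cap\mathcal{C}$ because $\pi^cx+n\in\mathcal{W}^{su}(x)+n=\mathcal{W}^{su}(x+n)$ by $\mathbb{Z}^d$-invariance of the foliation. With this in hand the computation needs one extra ingredient you omitted, namely the $\mathbb{Z}^d$-invariance of $\varphi$, which gives $\hol_x^{\pi^cx}\varphi=\hol_{x+n}^{\pi^cx+n}\varphi$. Then
\[
u(x+n)-u(x)=\bigl(u(T_n\pi^cx)-u(\pi^cx)\bigr)-\bigl(\hol_{x+n}^{T_n\pi^cx}\varphi-\hol_{x+n}^{\pi^cx+n}\varphi\bigr),
\]
and by \eqref{eq:solved-hol} (applied at $\pi^cx\in\mathcal{C}$) together with \eqref{additivity} both brackets equal $\hol_{\pi^cx+n}^{T_n\pi^cx}\varphi$, so the difference vanishes. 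In short, your idea is right, but you must run it at $\pi^cx$ rather than at $x$ and then use $\mathbb{Z}^d$-invariance and additivity to absorb the leftover $\hol_x^{\pi^cx}\varphi$ term.
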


\begin{proof}

  First we notice that $\pi^c(x+n)=T_n\pi^cx$ since both lie in the intersection
  $\mathcal{W}^{su}(x+n)\cap \mathcal{C}$. We will also use the $\mathbb{Z}^d$-invariance of \(\varphi\),
  the definition of $u$ along $\mathcal{C}$ given in \S~\ref{sec:solved-central}
  (that is $u(T_nx)-u(x)=\hol_{x+n}^{T_nx}\varphi$), and \eqref{additivity}.
\begin{align*}
  u(x+n)- u(x) &= u(\pi^c(x+n)) - u(\pi^cx) -\left( \hol_{x+n}^{\pi^c(x+n)} -
    \hol_{x}^{\pi^cx} \right)\varphi\\
  &= u(T_n\pi^cx) - u(\pi^cx) -\left( \hol_{x+n}^{T_n\pi^cx} -
    \hol_{x+n}^{\pi^cx+n} \right)\varphi\\
  &= \hol_{\pi^cx +n}^{T_n\pi^cx}\varphi - \hol_{\pi^cx +n}^{T_n\pi^cx}\varphi\\&= 0
\end{align*} \end{proof}

Let us clarify the regularity of the $u$. If we are in conditions for
the $u:\mathcal{C} \to \mathbb{R}$ to be $C^{r+\text{Hölder}}$ described in \S~\ref{sec:solved-central},
and using the second part of lemma~\ref{holreg} then
the extended $u$ is uniformly $C^{r+\text{Hölder}}$ along $\mathcal{W}^c$.

The only dependence on the direction $su$ of our $u$ is the top $x$ in
$\hol_{x_0}^x\varphi$, so using the first part of lemma~\ref{holreg} we have
the desired regularity along this direction.

So using Journé's lemma \cite{Journe1988RegularityLemmaFunctions} we
have that $u$ is $C^r$ if we have enough regularities and bunching.

\appendix
\section{Appendix}
\subsection{Regularities for Theorem~\ref{main}}\label{optimal}

When trying to apply our theorem one would need to know how much
bunching is needed, so below we clarify exactly the bunching and
regularities needed.

\begin{manualtheorem}{A'}\label{regularities} For this theorem we call
  $c_p$ the dimension of the central distribution for the partial
  hyperbolic splitting and $c_1$ the dimension of the generalized
  eigenspace of eigenvalues of modulus $1$ for the nearby linear map $F$.

  Fix a desired integer regularity $k\in \mathbb{N}$ and define the minimum
  regularity bounds as 
\begin{align*}
  R_{b} &\ge \begin{cases} \max\{3k+9c_1+3,9c_p+4\}  \quad&\text{if }c_1\neq0,\\ \max\{k+\varepsilon,
            9c_p+4\} \quad&\text{if }c_1=0 \end{cases}, \\
  R_{sb} &> \max\{k+2c_1,2c_p\}.\end{align*}

  In theorem~\ref{main} if we want $u\in C^{k+\text{Hölder}}$ the exact regularities needed are as follow:

 \begin{description} 
 \item[General Case] \phantom{T} 

   \begin{itemize}
  \item $F$ needs to be $R_b$-bunched and $f$ needs to be
  $C^{R_b}$,

  \item $F$ needs to be strongly $R_{sb}$-bunched and $\varphi$ needs to be
  $C^{R_{sb}}$, and
\item $f$ needs to be $C^{6c_p+1}$-close to $F$.
\end{itemize}

\item[Linear Case] In the case where $f=F$ is linear
\begin{itemize}
  \item $F$ needs to be strongly $R_{sb}$-bunched and $\varphi$ needs to be
  $C^{R_{sb}}$.
  \end{itemize}
Or
 \begin{itemize}
  \item $F$ needs to be strongly $(2c_p+\varepsilon)$-bunched, and
  \item $\varphi$ needs to be
    $C^{k+d+\text{Hölder}}$.
  \end{itemize}

\item[Low center dimension] If $c_p=1$ and $c_1=0$

   \begin{itemize}
  \item $F$ needs to be $(4+\varepsilon)$-bunched and $f$ needs to be
  $C^{4+ \text{Hölder}}$-close to $F$, and

  \item $F$ needs to be strongly $R_{sb}$-bunched and $\varphi$ needs to be
  $C^{R_{sb}}$.
\end{itemize}

 If $c_{p}=c_1=2$ and $F$ is irreducible with $d>4$ (or the
  irreducible factor containing the complex eigenvalues is of degree
  more than $4$)

     \begin{itemize}
  \item $f$ needs to be
  $C^{4k+8+\text{Hölder}}$-close to $F$, and

  \item $\varphi$ needs to be
  $C^{k+4+\text{Hölder}}$.
\end{itemize}
  
 \end{description}
\end{manualtheorem}

\begin{proof}

  Notice that we are always able to bootstrap the regularity of the
  solution by applying the theorem with $k=0$ first. After
  corollary~\ref{conjugacy} and remark~\ref{suregular} we have jointly
  integrability for any of the splittings coming from any partial
  hyperbolic splitting of $F$. So, once we know that
  \(\varphi\) is a coboundary, we can assume that the center direction is
  (the perturbation of) the generalized eigenspace of eigenvalues of
  modulus $1$ (or, if the case allows it, apply Livsic or Veech \cite{Veech1986PeriodicPointsInvariant}).

  In the general case is only a matter of following the regularities
  along the proof.

  For the linear case on one hand we don't need to use Moser to
  linearize the central translations. Also in some cases it might be
  better to bootsrap using Veech.

  In the low dimensional cases we can get the Diophantine property
  with just one translation (see \cite[Lemma
  A.11]{RodriguezHertz2005StableErgodicityCertain} for the case $c=2$), so we can bypass
  Moser and use a more traditional KAM theory instead (see
  \cite[\S~A.2]{Herman1979ConjugaisonDifferentiableDiffeomorphismes}
  and \cite[\S~6.1]{RodriguezHertz2005StableErgodicityCertain}).
\end{proof}

We can go even further saying that $\varphi$ only needs to be $C^{R_{sb}}$
along the central direction while in the $su$-direction it only needs
to be $C^k$. Furthermore if we ask $u$ to be $C^k$ along the
central direction and $C^{\ell}$ along the $su$-direction (with $\ell\leq k$)
we need \(\varphi\) to be $C^{R_{sb}}$ along the central direction and
$C^{\ell}$ along the $su$-direction. In here we asking for the
regularities to be uniform.

We remark that, after \cite[Proposition
1.5]{Veech1986PeriodicPointsInvariant}, at least some regularity is
expected to be lost. We do not claim that our specific regularity
loss and bunching are optimal, although probably more refined
techniques would be needed to make significant improvements in these numbers.

\subsection{A Couple of Toy Examples}\label{toy}

For this section we use the holonomy functionals defined in
\S~\ref{holonomy}.

The simplest example where we can use the idea of solving the equation
along $su$-leaves and along central leaves independently and then join
the solutions would be $(x,y)\to(x+\alpha, Ay)$ where \(\alpha\) is Diophantine
and $A$ is Anosov. For the sake of generality we prove something stronger.

\begin{thm}

  Let $f:M\to M$ be partially hyperbolic so that
  \eqref{eq:cohomological} admits a solution for any \(\varphi\) with
  trivial periodic cycle functionals, and let $\tilde{f}:M\times N \to M\times N$ and
  hyperbolic extension 
\begin{equation*}
  \tilde{f}(x,y)=(fx,g_xy)
\end{equation*} so that there is a probability measure $\mu$ preserved
by $g_x$ for every $x$ (and so that the expansion and contraction
along $N$ dominate the center direction of $f$).

Then $\tilde{f}$ is cocycle stable.

\end{thm}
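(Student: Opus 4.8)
The plan is to mimic the structure of the proof of Theorem~\ref{main}: solve \eqref{eq:cohomological} separately along the base direction (coming from $f$) and along the fiber direction (the hyperbolic extension), then glue the two partial solutions using the holonomy functionals of \S~\ref{holonomy}. First I would observe that since the expansion/contraction along $N$ dominates the center direction of $f$, the product $\tilde f$ is itself partially hyperbolic with $E^s,E^u$ of $\tilde f$ coming from the $g_x$-hyperbolic splitting and $E^c_{\tilde f}$ equal to $E^c_f$ (pulled back along the projection); moreover $\tilde f$ is jointly integrable, the $su$-leaves being $\{x\}\times N$ and the central leaves being $\mathcal{W}^c_f(x)\times\{y\}$ (or their obvious saturations). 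So the hypotheses of the preceding machinery are in force.

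Next, given $\varphi:M\times N\to\mathbb R$ with trivial periodic cycle functionals, I would split the solution as follows. Fix a favorite point $y_0\in N$; restricting $\varphi$ to $M\times\{y_0\}$ — or rather pushing $\varphi$ down via the $su$-holonomy — gives, using that $f$ on $M$ already solves the cohomological equation for coboundary-obstruction-free data, a function $u_0$ on $M\times\{y_0\}$ (equivalently on a central leaf) with $u_0(fx)-u_0(x)=\varphi(x,y_0)+\mathrm{const}$. This is the analogue of Lemma~\ref{centralsol}, except here the role of the Diophantine central translations is played directly by the hypothesis that $f$ itself is cocycle stable with trivial-$PCF$ data; no Moser/KAM step is needed. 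Then I would extend $u_0$ to all of $M\times N$ by the same formula as in \S~\ref{extend},
\begin{equation*}
  u(x,y)=u_0(x,y_0)-\hol_{(x,y)}^{(x,y_0)}\varphi,
\end{equation*}
where $\hol$ is taken along $\mathcal{W}^{su}_{\tilde f}(x,y)=\{x\}\times N$. Here I need $\hol^{(x,y_0)}_{(x,y)}\varphi$ to be well defined and to solve the equation along fibers: well-definedness is exactly the trivial-$PCF$ hypothesis, and the fact that it formally solves the cohomological equation in the fiber direction is the "fundamental theorem of dynamics" \eqref{fundamental}. Using $\pi^c\tilde f=\tilde f\pi^c$ and $f$-invariance of the foliations, the verification that this $u$ satisfies $u(\tilde f(x,y))-u(x,y)=\varphi(x,y)+\mathrm{const}$ is the same two-line computation as in the last lemma of \S~\ref{extend}, now with $\tilde f$ in place of $f$. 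Finally, that $u$ descends to $M\times N$ (if $N$ is non-compact, e.g.\ a covering picture) is handled exactly as in the corresponding lemma, or is vacuous if $N$ is already compact.

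The remaining point is regularity, i.e.\ getting genuine cocycle stability rather than just a measurable/continuous solution: here I would invoke the hyperbolic extension being a cocycle over $f$ with dominated fiber dynamics (the same device used in the proof of Lemma~\ref{holreg}, where $\tilde f(x,t)=(fx,t+\varphi(x))$ is itself shown partially hyperbolic and bunched), so the holonomy functional $\hol^{(x,y_0)}_{(x,y)}\varphi$ is as regular as $\varphi$ along $\{x\}\times N$, and then Journé's lemma upgrades the joint regularity. The main obstacle I anticipate is precisely bookkeeping this regularity statement cleanly: unlike in Theorem~\ref{main} we are not assuming $\tilde f$ is a perturbation of anything algebraic, so I cannot quote $R$-bunched holonomy regularity off the shelf, but must instead phrase the needed domination hypothesis (the parenthetical "expansion and contraction along $N$ dominate the center direction") in a way that is exactly strong enough for Journé's lemma to apply to the restriction of $u$ to $su$-leaves and to central leaves. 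Everything else is a direct transcription of the $f=F$-agnostic parts of \S\S~\ref{sec:solved-central}--\ref{extend}, with the Diophantine/Moser sections bypassed because the base cohomological equation is solvable by hypothesis.
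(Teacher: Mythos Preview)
There is a genuine gap. Your approach fixes a basepoint $y_0\in N$ and tries to mimic \S\ref{extend} with the projection $\pi^c(x,y)=(x,y_0)$. But the verification in \S\ref{extend} hinges on $\pi^c\tilde f=\tilde f\pi^c$, and here $\tilde f\pi^c(x,y)=(fx,g_xy_0)$ while $\pi^c\tilde f(x,y)=(fx,y_0)$: these agree only when $y_0$ is a \emph{common fixed point} of every $g_x$, which is not part of the hypothesis. Concretely, if you compute $u(\tilde f(x,y))-u(x,y)$ with your formula you pick up an extra term $\hol_{(fx,g_xy_0)}^{(fx,y_0)}\varphi$ that does not vanish. (A related symptom: you never use the invariant measure $\mu$, which is the only structural hypothesis on the fiber dynamics.) Your description of the $su$-leaves as $\{x\}\times N$ is also off---$E^s_f,E^u_f$ are still hyperbolic for $\tilde f$, so $\{x\}\times N$ is only \emph{contained in} an $su$-leaf---but this is secondary.

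The paper's fix is exactly to replace the Dirac mass at $y_0$ by the common invariant measure $\mu$: set $\varphi_\mu(x)=\int_N\varphi(x,z)\,d\mu(z)$, solve $\varphi_\mu=u_\mu\circ f-u_\mu+K$ on $M$ using the hypothesis on $f$, and then put
\[
u(x,y)=u_\mu(x)+\int_N\hol_{(x,z)}^{(x,y)}\varphi\,d\mu(z).
\]
The point is that $g_x$-invariance of $\mu$ lets you change variables $z\mapsto g_xz$ in $u(\tilde f(x,y))=u_\mu(fx)+\int_N\hol_{(fx,z)}^{(fx,g_xy)}\varphi\,d\mu(z)$, turning the integrand into $\hol_{\tilde f(x,z)}^{\tilde f(x,y)}\varphi$ so that \eqref{fundamental} applies directly and the computation closes up in two lines. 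In other words, $\mu$ plays the role of an ``invariant basepoint'' that your fixed $y_0$ cannot.
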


\begin{thm}

  Let $f: M \to M$ be cocycle stable and consider the suspension space
  $\tilde{M}= (\mathbb{R} \times M)/f$ where $(x,y)\sim (x-1,fy)$. For $\alpha$ Diophantine
  the map $\tilde{f}: \tilde{M} \to \tilde{M}$ 
\begin{equation*}
  \tilde{f}(x,y)=(x+\alpha, y)
\end{equation*} is cocycle stable

\end{thm}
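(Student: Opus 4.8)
The plan is to reduce the cocycle stability of the suspension $\tilde f(x,y)=(x+\alpha,y)$ to the cocycle stability of the base map $f$, which is assumed. First I would unwind the structure of $\tilde M=(\mathbb R\times M)/{\sim}$: a function $\tilde\varphi:\tilde M\to\mathbb R$ is the same data as a function $\Phi:\mathbb R\times M\to\mathbb R$ with $\Phi(x,y)=\Phi(x-1,fy)$, and similarly for the putative transfer function $\tilde u$. The central direction of $\tilde f$ is the $\mathbb R$-factor (the flow direction of the suspension), while the stable and unstable directions of $\tilde f$ are exactly those of $f$, lifted trivially in the $x$-variable; in particular the $su$-leaves of $\tilde f$ are the sets $\{x\}\times\mathcal W^{su}_f(y)$ (or unions thereof if $f$ has stable/unstable foliations only). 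So the hypothesis that $\tilde\varphi$ has trivial periodic cycle functionals says precisely that, for each fixed $x$, the slice $\tilde\varphi(x,\cdot)$ has trivial periodic cycle functionals as a function on $M$ for the dynamics $f$.

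The key steps, in order, would be: (1) For each fixed base-point $x$, use cocycle stability of $f$ to solve $\tilde\varphi(x,y)=w_x(fy)-w_x(y)+c(x)$ with $w_x:M\to\mathbb R$ and a constant $c(x)=\int_M\tilde\varphi(x,\cdot)$; the normalization $\int_M w_x=0$ makes $w_x$ unique, hence measurably (and, by the regularity statements available, as-regular-as-$\tilde\varphi$) dependent on $x$, and the compatibility relation $\tilde\varphi(x,y)=\tilde\varphi(x-1,fy)$ forces $w_x(y)=w_{x-1}(fy)$ and $c(x)=c(x-1)$, so $c$ descends to a $1$-periodic function of $x$. (2) Now treat the remaining equation in the center direction: I want $\tilde u(x,y)$ with $\tilde u(\tilde f(x,y))-\tilde u(x,y)=\tilde\varphi(x,y)-\mathrm{const}$, i.e. $\tilde u(x+\alpha,y)-\tilde u(x,y)=w_x(fy)-w_x(y)+c(x)-\mathrm{const}$. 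Writing $\tilde u(x,y)=w_x(y)\cdot(\text{something})$ does not immediately work because $w_{x+\alpha}\neq w_x$; instead set $\tilde u(x,y)=U(x,y)$ and look for $U$ of the form $U(x,y)=v(x)+\big(\text{a piece handling }w\big)$. The clean way: since $c(x)$ is $1$-periodic and $\alpha$ is Diophantine, Fourier series on the circle solves $v(x+\alpha)-v(x)=c(x)-\int c$ for a function $v$ as regular as $c$ (minus the usual Diophantine loss). For the $w$-part, I would solve the functional equation $W(x+\alpha,y)-W(x,y)=w_x(fy)-w_x(y)$ by the telescoping/Fourier trick in $x$: expanding $w_x$ itself isn't periodic in $x$, but $x\mapsto w_x$ is "$1$-equivariant" under $f$, so one can Fourier-expand along the base circle after absorbing the $f$-twist, exactly as in the proof of Theorem~\ref{affine} where the cocycle is first projected to the torus.

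The cleanest route, and the one I would actually write, is to combine steps (1)–(2) into a single application of the suspension structure: define $\tilde u(x,y)$ on $\mathbb R\times M$ by solving along the $\mathbb R$-direction with Fourier/Diophantine estimates, using as "Fourier coefficients" the family $w_x$ produced in step (1), and then check that the equivariance $w_x(y)=w_{x-1}(fy)$ makes $\tilde u$ well defined on the quotient $\tilde M$. Concretely, writing $\hat w_k(y)=\int_0^1 w_x(y)e^{-2\pi i k x}\,dx$ (interpreting $w_x$ via a section of the suspension bundle), the solution has coefficients $\hat{\tilde u}_k(y)=\hat w_k(y)/(e^{2\pi i k\alpha}-1)$ for $k\neq 0$, controlled by the Diophantine condition on $\alpha$, plus the $v(x)$ term handling $k=0$; and the $f$-equivariance of $x\mapsto w_x$ translates into a matching twist on the $\hat w_k$ that is exactly compensated by the identification $(x,y)\sim(x-1,fy)$. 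Finally one verifies $\tilde u$ has the asserted regularity by combining the regularity of $v$ (Fourier on the circle, Diophantine loss $\sim\tau$) with that of each $w_x$ (inherited from cocycle stability of $f$, uniformly in $x$) and then, if $f$ has a genuine $su$-foliation, a Journé-type argument to upgrade from "regular along the center and along $su$" to jointly regular — though since $\tilde f$'s $su$-distribution is literally $f$'s, this step is essentially inherited.

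The main obstacle I expect is step (1)'s uniformity and measurability: cocycle stability of $f$ as stated in \cite{Katok.Kononenko1996CocycleStabilityPartially} gives, for each $x$, existence of a solution $w_x$, but to run the Fourier argument in the base direction I need $x\mapsto w_x$ to be a genuine (measurable, and ideally continuous/smooth) function of $x$ with uniform bounds, and I need to know the regularity of $w_x$ in $y$ is controlled by that of $\tilde\varphi(x,\cdot)$ uniformly in $x$. Getting this cleanly either requires invoking a quantitative version of cocycle stability (the transfer function depends continuously on the cocycle in the appropriate topologies, which is typically true because $w_x$ is reconstructed from convergent holonomy-functional series as in \S~\ref{holonomy}), or re-deriving $w_x$ directly from the holonomy functionals of $\tilde f$ restricted to the slice, which automatically gives the uniformity for free. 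I would take the latter route: build $w_x$ as $w_x(y)=\hol^{y}_{y_0}\tilde\varphi(x,\cdot)$ along $su$-leaves exactly as in the body of the paper, so that uniformity in $x$ is immediate and the equivariance under $f$ is built in.
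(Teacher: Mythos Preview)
Your step (1) contains a genuine gap: the claim that trivial $\tilde f$-PCF implies, for each fixed $x$, trivial $f$-PCF for the slice $\tilde\varphi(x,\cdot)$ is false. The $\tilde f$-holonomy along a lifted cycle $\{x\}\times\gamma$ is computed with $\tilde f$-orbits, which translate the $x$-coordinate by $\alpha$ and only apply $f$ to $y$ through the identification $(x,y)\sim(x-1,fy)$, whereas the $f$-PCF of the slice function along $\gamma$ uses $f$-orbits in $M$ at fixed height $x$; these are different functionals. Concretely: if $p,q\in M$ are distinct fixed points of $f$ and $\varphi=u\circ\tilde f-u$ for a generic $u$ on $\tilde M$, then $\varphi(x,p)-\varphi(x,q)=\bigl(u(\cdot,p)-u(\cdot,q)\bigr)(x+\alpha)-\bigl(u(\cdot,p)-u(\cdot,q)\bigr)(x)$, which is generically nonzero in $x$; but if $\tilde\varphi(x,\cdot)$ were an $f$-coboundary plus a constant $c(x)$, evaluating at the fixed points would force $\varphi(x,p)=\varphi(x,q)=c(x)$. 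So the slice equation $\tilde\varphi(x,y)=w_x(fy)-w_x(y)+c(x)$ is in general \emph{not} solvable, and cocycle stability of $f$ cannot be invoked as you propose. Your fallback of building $w_x$ via holonomy functionals does not rescue this: if you use the $f$-holonomy of the slice you hit the same unjustified PCF claim, and if you use the $\tilde f$-holonomy $w_x(y)=\hol_{(x,y_0)}^{(x,y)}\varphi$ then $w_x$ satisfies $w_{x+\alpha}(y)-w_x(y)=\varphi(x,y)-\varphi(x,y_0)$ rather than the $f$-equation, and the leftover $\varphi(\cdot,y_0)$ is not $1$-periodic unless $y_0$ happens to be $f$-fixed.

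The paper avoids this entirely by never solving an $f$-equation on slices. It picks an $f$-invariant probability $\mu$ on $M$, sets $\varphi_\mu(x)=\int_M\varphi(x,z)\,d\mu(z)$, which \emph{is} $1$-periodic by $f$-invariance of $\mu$, solves $\varphi_\mu(x)=u_\mu(x+\alpha)-u_\mu(x)+K$ on the circle via the Diophantine condition, and puts
\[
u(x,y)=u_\mu(x)+\int_M\hol_{(x,z)}^{(x,y)}\varphi\,d\mu(z),
\]
where the holonomy is the $\tilde f$-holonomy (well-defined by the trivial $\tilde f$-PCF hypothesis). The $\mu$-average plays the role of your basepoint $y_0$ but, unlike a single point, it delivers the $1$-periodicity needed for the circle equation and the well-definedness on $\tilde M$ automatically. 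In particular the cocycle stability of $f$ is never actually invoked in this argument; only the invariant measure and the $su$-structure inherited from $f$ are used.
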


The proof for both theorems is the same (that is the reason for the
awkward notation in the second theorem).
\begin{proof}
 For the second theorem consider $\mu$ an invariant measure for $f$.
  
  If \(\varphi\) has trivial periodic cycle functionals then it easy to see
  that so does 
\begin{equation*}
  \varphi_{\mu}(x) = \int_N \varphi(x,z)\,d\mu(z)
\end{equation*} and so we can solve 
\begin{equation*}
  \varphi_{\mu}(x)=u_{\mu}(fx) - u_{\mu}(x) +K.
\end{equation*}

Now define 
\begin{equation*}
  u(x,y)=u_{\mu}(x) + \int_N\hol_{(x,z)}^{(x,y)}\varphi \,d\mu(z).
\end{equation*}
Now, using (\ref{fundamental}), we show
\begin{align*}
  u(\tilde{f}(x,y))-u(x,y)&=u_{\mu}(fx)-u_{\mu}(x)+\int_{N}\left(\hol_{\tilde{f}(x,z)}^{\tilde{f}(x,y)}-\hol_{(x,z)}^{(x,y)}\right)\varphi \,d\mu(z)
  \\
                  &=\int_N \varphi(x,z)\,d\mu(z)-K+\int_{N}(\varphi(x,y)-\varphi(x,z))\,d\mu(z)\\ &=\varphi(x,y)-K.
\end{align*}
\end{proof}

And interesting example of the first theorem would be and automorphism
$F$ of a nilmanifold $N$ so that the fibers of the map $\pi:N \to \mathbb{T}^a $ are
hyperbolic and so that the projection of $F$ to $\mathbb{T}^a$ is Katznelson
irreducible. These properties are easy to check: if 
\begin{equation*}
  DF = \begin{pmatrix} A_1 & & & & \\
                           & A_2 & & \mbox{\huge $\ast$}  &\\
                           & & \ddots & &\\
 &  \mbox{\huge $0$}      & & A_{k-1}     \\
         
                      & & & &A_k
 \end{pmatrix}
\end{equation*}
is so that $A_i$ is hyperbolic for $i=1,\dots, k-1$ and $A_k$ is
Katznelson irreducible, with respect to the partial hyperbolic
splitting which should have its center direction on the coordinates
corresponding to $A_k$, then $F$ is cocycle stable.

This proof actually work for any map that has a cocycle stable factor
and whose fibers are hyperbolic (dominating the center of the factor)
with ``good enough'' transversal measures.

``Good enough'' might be a smooth volume for instance. We can alway
use axiom of choice to create the transversal measures but in this
case the solution $u$ wouldn't even be measurable.

\printbibliography

\end{document}